\numberwithin{equation}{section}
\renewcommand{\phi}{\varphi}
\renewcommand{\tilde}{\widetilde}
\DeclareMathOperator{\vol}{vol}
\DeclareMathOperator{\Id}{Id}
\DeclareMathOperator{\Cal}{Cal}
\DeclareMathOperator{\supp}{supp}
\def\GFQI {Generating Function Quadratic at Infinity }
\def \GFQIs {Generating Functions Quadratic at Infinity }
\def \ZZ {{\mathbb Z}}
\def \Ham {\rm Ham}
\def \DHam {\rm {DHam}}
\def \cF {\mathcal F^\bullet}
\def \F {\mathcal F}
\def \cG {\mathcal G^\bullet}
\def \cI {\mathcal I^\bullet}
\def \meas {\mathit m}
\renewcommand{\theenumi}{(\alph{enumi})}
\begin{document}
\newtheorem{theorem}{Theorem}[section]
\newtheorem{lemma}[theorem]{Lemma}
\newtheorem{proposition}[theorem]{Proposition}
\newtheorem{Cor}[theorem]{Corollary}
\newtheorem{definition}[theorem]{Definition}
\def \cal {\mathcal}
\def \dispdot {}

\theoremstyle{definition}
\newtheorem{example}[theorem]{Example}
\newtheorem{exer}[section]{Exercise}
\newtheorem{conj}{Conjecture}

\theoremstyle{remark}
\newtheorem{remark}[theorem]{Remark}
\newtheorem{remarks}[theorem]{Remarks}
\newtheorem{question}{Question}

\font\smallcaps=cmcsc10
\font\nome=cmr8

\title{Non-convex Mather's theory and the Conley conjecture on the cotangent bundle of the torus}
\author{ { \sc Claude Viterbo}
\\
D\'epartement  de Math\'ematique\\ Universit\'e de Paris-Sud, Paris-Saclay, Orsay, France
\\ {\tt claude.viterbo@universite-paris-saclay.fr}
\thanks{Part of this work was done while the author was at Centre de Math\'ematiques  Laurent Schwartz, UMR 7640 du CNRS,
\'Ecole Polytechnique - 91128 Palaiseau, France and  DMA,  UMR  8553 du CNRS
 \'Ecole Normale Sup\'erieure-PSL University, 45 Rue d'Ulm, 75230 Paris Cedex 05,  France. Supported also by Agence Nationale de la Recherche projects Symplexe,  WKBHJ and MICROLOCAL. }
}
\date{ \today \; at \DTMcurrenttime}
\maketitle

\begin{abstract}The aim of this paper is to use the methods and results of symplectic homogenization (see \cite{STH}) to prove existence of periodic orbits and invariant measures with rotation vector belonging to the set of differentials of the Homogenized Hamiltonian.   We also prove the Conley conjecture on the cotangent bundle of the torus. Both proofs rely on Symplectic Homogenization and a refinement of it. \end{abstract}
 \tableofcontents

 \section{Introduction} 
The symplectic theory of Homogenization, set up in \cite{STH},  associates to each Hamiltonian $H(t,q,p)$ on $T^*T^n$ a homogenized Hamiltonian, $\overline H (p)$, such that   $H_k(t,q,p)=H(kt,kq,p)$ $\gamma$-converges to $\overline H (p)$, where the metric $\gamma$ , a symplectic metric defined in \cite{Viterbo-STAGGF}, will be defined later \footnote{see \cite{Viterbo-STAGGF},  and the related  Hofer metric in \cite{Hofer}. See also \cite{Humiliere}  for the study of this metric and its completion mentioned further.}.  The goal of this paper is to draw some dynamical consequences of the homogenization theorem, to prove existence of certain trajectories of the flow $\varphi^t$ and then of invariant measures.
We also apply this methods to the proof of the Conley conjecture\footnote{according to \cite{Salamon-Zehnder} this was stated-at least for closed manifolds- by C. Conley in \cite{Conley-2}.} on $T^*T^n$ , claiming the existence of infinitely many distinct and non-trivial periodic points for the time-one map of a compact supported Hamiltonian map in $T^*T^n$. 

Symplectic Homogenization may be summarized as the following heuristic statement

\bigskip 
{\bf Symplectic Homogenization Principle:} {\it The value of any variational problem associated to $H_k$  will converge to the value of the same variational problem associated to $\overline H$. }
\bigskip 

While the above sentence is vague and does not claim to be a mathematical statement, we hope it carries sufficient meaning for the reader to help him understand  the substance of the method used in the present paper.

If we denote by  $\varphi^t$ the flow associated to the Hamiltonian $H$, and by $\overline\varphi ^t$ the "flow" of $\overline H(p)$ -- defined in the Humili\`ere completion $\widehat \DHam(T^*T^n)$ of the group of Hamiltonian diffeomorphisms of $T^*T^n$ for the metric $\gamma$, then $\overline \varphi^t$  is the $\gamma$-limit of $\rho_{k}^{-1}\varphi^{kt}\rho_{k}$ where $\rho_{k}(q,p)=(kq,p)$ and $\varphi^t$ is the flow of the Hamiltonian vector field associated to $H$.
Note that $\rho_k^{-1}$ is not well-defined, but $\varphi_k=\rho_{k}^{-1}\varphi^{kt}\rho_{k}$ is well defined if $\varphi$ is Hamiltonianly isotopic to the identity, as the unique solution of $\rho_k\varphi_k=\varphi^k\rho_k$ obtained by continuation starting from $\varphi=\varphi_k=\Id$.

\section{Notations} Let $H(t,q,p)$ be a Hamiltonian on $T^*T^n$. We denote by $\varphi$ the time-one flow $\varphi^1$ of $X_H$, where $X_H$ is defined by $\omega(X_H,  )= -dH$, the Hamiltonian vector field associated to $H$. 

We denote  by $\Phi^t$ the lift of $\varphi^t$ to the universal cover $ {\mathbb R}^{2n}$ of $T^*T^n$ (or $\Phi_H^t, \varphi_H^t$ if there is any ambiguity). 

The action of a trajectory $\gamma (t)=(q(t), p(t))=\varphi^t(q(0),p(0))$ defined on $[0,1]$ is $$A(\gamma)=\int_0^1 [p(t)\dot q(t) -H(t,q(t),p(t)) ]dt \dispdot $$ The average action for a solution defined on $[0,T]$ is $$A_{T}(\gamma)=\frac{1}{T}\int_0^T [p(t)\dot q(t) -H(t,q(t),p(t)) ]dt \dispdot $$
We denote  by $\lambda$ the Liouville form $pdq$ on $T^*T^n$ and write $\gamma^*\lambda=p\dot q dt$  for $\gamma=(q,p)$, so that the action is also given by $$A(\gamma)=\int_0^1[\gamma^*\lambda-H(t,\gamma(t))]dt$$
 Finally if $f$ is a function on a manifold $M$ we denote by $$f^c=\{x\in M \mid f(x)<c\}$$ 
 For the definition of the strong differential $d_sf(x)$ we refer to Appendix \ref{Appendix-crit}, Definition \ref{Def-9.14}. Then $D_sf(x)$ is the set of limits of $d_sf(x_k)$ for all sequences $x_k$ converging to $x$ and  the Clarke differential $\partial_Cf(x)$ will be found in Definition \ref{Def-12.1}.

\section{Statement of the main results}
Our goal is to prove the following theorems. The rotation vector of an invariant probability measure on $S^1\times T^*M$  is defined just after the statement of the theorem. 

\begin{theorem} \label{main-theorem}
Let $H(t,q,p)$ be a compact supported Hamiltonian in $S^{1}\times T^{*}T^{n}$ 
and denote by $\overline H (p)$ its homogenization defined in \cite{STH}. 
Let   $\alpha \in d_s \overline H (p)$. 
Then there exists a sequence   $(\alpha_k)_{k\geq 1}$ such that   $\lim_{k}\alpha_{k}= \alpha$ and for infinitely many $k$ a solution $(q_k,p_k, p'_k)$  of $\Phi^{k}(q_{k},p_{k})=(q_{k}+ k \alpha_{k} ,p'_{k})$. Moroever as $k$ goes to infinity the average action 
$$A_k = \frac{1}{k} \int_{0}^k [\gamma_{k}^*\lambda -H(t,\gamma_{k}(t)) ]dt$$ of the orbit $\gamma_{k} (t)= \varphi^{t}(q_{k},p_{k})$  converges
  to $$\lim_{k}A_{k}=\langle p, \alpha \rangle -\overline H(p)$$ Therefore for each $\alpha \in \partial_C {\overline H}(p)$ there exists an invariant 
 probability measure $\meas_{\alpha}$,  having compact support in $T^*T^n$, with rotation vector $\alpha$ and average action  
  $${\mathcal A}(\meas_{\alpha})\overset{def}=\int_{S^1\times T^*T^n} [p\frac{\partial H}{\partial p}(t,q,p)-H(t,q,p) ]d\meas_{\alpha}= \langle p, \alpha\rangle-\overline H(p).$$
\end{theorem} 

\begin{remarks}
\begin{enumerate} 
\item  All our invariant measures will be probability measures.  
 First of all,  remember that a measure $m$  is invariant if, setting $\psi^t(s,z)=(t+s, \varphi^t(z))$, we have $(\psi^1)_{*}(\meas)=\meas$. For $\alpha \in d_s\overline H(p)$, once we found the sequence $(q_k,p_k)$, the measure $\meas_\alpha$ is obtained with the usual Krylov-Bogolioubov method as the limit of the measures uniformly distributed on $\{\psi^{kt}(0,q_k,p_k) \mid t\in[0,1]\}$. The extension to $\alpha \in \partial_C\overline H(p)$ is obtained by taking convex hulls of Krylov-Bogolioubov measures. 

\item As detailed in \cite{Mather}, page 176, the rotation vector (see \cite{Schwartzman}) of the  measure $\meas$ is the element of $H_{1}(T^n,  {\mathbb R} )$  given by  $$r(\meas)=\int_{S^1\times T^*T^n}\frac{\partial H}{\partial p}(q,p) d\meas$$ or equivalently, in its dual form, as the map 
 \begin{align*} r(\meas):&  H^1(T^n, {\mathbb R} ) \longrightarrow {\mathbb R} \\ & \tau    \longrightarrow \int_{S^1\times T^*T^n} i_{X_H}\tau d\meas =
 \int_{S^1\times T^*T^n} \langle \tau(q), \frac{\partial H}{\partial p} \rangle d\meas  =\\ & \qquad \int_{S^1\times T^*T^n}\sum_{j=1}^n  \tau_j \frac{\partial H}{\partial p_j}(t,q,p) d\meas  \end{align*} 
Here we identify the closed $1$-form $\tau$ with its cohomology class and the form on $T^n$  to its pull-back on $T^*T^n$ ! It is well-known that the right-hand side only depends on this cohomology class and not on the specific choice of a representative. 

\item We will  define $d_s\overline H(p)$ and $\partial_C \overline H (p)$ in Appendix \ref{Appendix-crit}. As we pointed out in \cite{STH}, we cannot hope in general\footnote{Of course if $H$ is integrable (i.e. only depends on $p$) then $\overline H=H$ and we can get better regularity. An other example is the pendulum, for which $\overline H$ is smooth except for $p=A$ where $A$ is the area enclosed by the two separatrices.} anything better than $\overline H$ to be in $C^{0,1}$. It is thus important to figure out the set $\partial_C \overline H(p)$ when $\overline H$ is not differentiable at $p$. 
\end{enumerate} 
\end{remarks} 

Of course the Liouville measure $\omega^n$ is invariant. While it is not a probability measure,  since $\varphi$ is compact supported, we may truncate $\omega^n$ to $\chi ( q,p)\cdot \omega^n$, where $\chi\equiv \frac{1}{\vol(\supp(\varphi))}$ on the support\footnote{Her by abuse of language we mean by support of $\varphi$ the support of $H$. The constant is chosen to ensure that $\meas$ has integral $1$.} of $\varphi$ and $\chi\equiv 0$ elsewhere.  We call this the {\bf Liouville invariant measure}. 

However, 
\begin{lemma} \label{Lemma-3.4}
For $\meas$ the Liouville invariant measure  we have $r(\meas)=0$ and  $$\mathcal A (\meas)=-(n+1) \frac{ \Cal(\varphi)}{\vol(\supp(\varphi))}$$ where $\Cal(\varphi)$ is the Calabi invariant defined by $$\Cal (\varphi)=\int_{S^1\times T^n} H(t,q,p) \omega^n \wedge dt$$ and $\vol(U)=\int_U \omega^n$. 
\end{lemma} 

Note that  if $\alpha=0$, ${\mathcal A}(\meas_{0})=-\overline H(p_{0})$, where $p_{0}$ is a critical point\footnote{Since $\overline H$ is only continuous, we mean $0\in \partial_C\overline H(p_0)$. See the definition of $\partial_C\overline H$ in Appendix (section \ref{sec:10}).} of $\overline H$. So if for some $\alpha$, $m_\alpha=\chi (q,p) \omega^n$ is  the standard measure,
 it must correspond to  $\alpha=0$ and we must have the equality $$\overline H(p_{0})= \frac{-(n+1)\Cal (\varphi)}{\vol (\supp(\varphi))} $$ 

\begin{remark} 
Note that if the measure obtained for $\alpha=0$ is the trivial measure, we may draw some precise conclusions. 

Indeed since our invariant measure is obtained by the Krylov-Bogolyoubov method, by taking limits of the measure carried by orbits, 
the support of $\chi$ must be contained in the support of $H$, since outside the support, the Krylov-Boglyoubov method yields the Dirac mass at a point. 

 In other words 
 the flow must then have the following property : there exists a sequence $z_k$ of points such that the normalized measure on $\{\Phi^{kt}(z_k) \mid t\in [0,1]\}$ converges to the Liouville measure on the support of $\varphi$. Since $z_k$ belongs to the support of  $H$, taking subsequences we may assume $z_k$ converges to some point $z$. Then any neighborhood $U$ of $z$ contains a point whose orbit comes  arbitrarily close to any other point in the support of $\chi$. In other words, $\Phi^{[0,+\infty[}(U)$ is dense in the support of $H$. Applying Baire's theorem to the sequence of $\Phi^{[0,+\infty[}(B(z, \frac{1}{n} ))$ we get that $\Phi^{[0,+\infty[}(z)$ is dense, hence the  flow is topologically transitive on its support\footnote{The equivalence of topological transitivity and the existence of a dense orbit in this setting is established in \cite{Silverman-DO-TT}.}.
\end{remark} 

Remember also that $\overline H$ coincides with Mather's $\alpha$ function when $H$ is strongly convex in $p$, that is $D_{pp}^2H(q,p)\geq \varepsilon \Id$ for some $\varepsilon >0$ (see \cite{STH}, section 12.2), but in this case the set of values of $\partial_C \overline H (p)$ as $p$ describes $ {\mathbb R}^n$  is the whole of $ {\mathbb R} ^n$, so we get any rotation vector, as expected from standard Aubry-Mather theory (see \cite{Mather}).  
This may be generalized to the following : we say that $H$ is superlinear if $$\lim_{ \vert p \vert \to \infty}  \frac{ H(q,p)}{\vert p \vert}=+\infty$$
Note that in \cite[section 11.1]{STH}, we showed that $\overline H$ is well defined beyond the compact supported case, in particular in the coercive case. 
\begin{Cor} 
Let $H(q,p)$ be a superlinear Hamiltonian on $T^*T^n$. Then $\overline H$ is superlinear, so that for any $\alpha \in {\mathbb R} ^n$, we may find an invariant measure  for the flow, with rotation vector $\alpha$. 
\end{Cor}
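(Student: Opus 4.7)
The plan is to reduce the corollary to Theorem~\ref{main-theorem} via three ingredients: extending $\overline H$ (and its coercivity) to non-compactly-supported Hamiltonians, using convex analysis to produce a point $p_\alpha$ with $\alpha\in\partial_{C}\overline H(p_\alpha)$, and a truncation argument returning us to the compact-supported setting of the main theorem.

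\textbf{Step 1: extension and coercivity of $\overline H$.} First I would establish a locality property of homogenization: if two compact supported Hamiltonians $H_1,H_2$ coincide on $S^1\times T^*T^n$ in a neighborhood of the Lagrangian $\{p=p_0\}$, then $\overline{H_1}(p_0)=\overline{H_2}(p_0)$. This should follow from the variational characterization of $\overline H$ in \cite{STH}, since only orbits that remain close to $\{p=p_0\}$ contribute to $\overline H(p_0)$. Using this, one defines $\overline H(p_0)$ for a coercive $H$ by $\overline H(p_0) := \overline{\widetilde H}(p_0)$ for any compact supported truncation $\widetilde H$ agreeing with $H$ on $\{|p-p_0|<\delta\}$. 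Coercivity of $\overline H$ then follows from monotonicity of homogenization ($H_1\le H_2 \Rightarrow \overline{H_1}\le \overline{H_2}$) together with the identity $\overline K=K$ for Hamiltonians $K(p)$ depending only on $p$: since $H$ is coercive, $H(t,q,p)\ge K(p)$ for some coercive $K$, and truncation comparison gives $\overline H(p)\ge K(p)$.

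\textbf{Step 2: finding $p_\alpha$.} Since $\overline H$ is (super)linearly coercive and $C^{0,1}$, the locally Lipschitz function $g_\alpha(p):= \overline H(p)-\langle\alpha,p\rangle$ is coercive and attains its infimum at some $p_\alpha\in\mathbb{R}^n$. At this minimum, the basic sum rule for the Clarke subdifferential (trivial for the linear summand) yields $0\in \partial_C g_\alpha(p_\alpha)= \partial_C \overline H(p_\alpha)-\alpha$, hence $\alpha\in\partial_C \overline H(p_\alpha)$.

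\textbf{Step 3: reducing to Theorem~\ref{main-theorem}.} Choose a compact supported $H_R(t,q,p):=\chi(p)H(t,q,p)$ with $\chi\equiv 1$ on $\{|p|\le R\}$ and $R$ so large that $p_\alpha$ lies deep inside $\{|p|<R\}$. By Step~1, $\overline{H_R}$ coincides with $\overline H$ near $p_\alpha$, and in particular $\alpha\in \partial_C \overline{H_R}(p_\alpha)$. Theorem~\ref{main-theorem} applied to $H_R$ then yields an invariant measure with rotation number $\alpha$; one must finally check that the orbits produced are trapped in $\{|p|<R\}$ and so are genuine orbits of the original $H$.

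The main obstacle is Step~1, i.e.\ the extension of homogenization to coercive (non-compactly-supported) Hamiltonians together with the corresponding confinement of the orbits from Theorem~\ref{main-theorem}: one must control that the trajectories $\gamma_k$ realizing $\alpha\in\partial_C\overline H(p_\alpha)$ do not leave the region where $H_R$ and $H$ agree. The expected input is an a priori momentum bound coming from coercivity of $H$ and the fact that the average action $A_k\to\langle p_\alpha,\alpha\rangle-\overline H(p_\alpha)$ stays bounded, which together should prevent the $p$-coordinate along $\gamma_k$ from escaping to infinity.
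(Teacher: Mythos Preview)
Your proposal is essentially correct and matches the paper's approach. The paper itself gives no detailed proof of this corollary: it is stated in the introduction, and later (Section on non-compact supported Hamiltonians) the paper simply records the coercive analogue of Theorem~\ref{main-theorem} (Theorem~\ref{main-theorem-coercive}) with the remark that ``the same truncation tricks as in \cite{STH}'' yield the extension, ``proofs \dots\ left to the reader.'' Your three-step outline (truncation to define $\overline H$ and obtain its coercivity, minimization of $\overline H(p)-\langle\alpha,p\rangle$ to produce $p_\alpha$ with $\alpha\in\partial_C\overline H(p_\alpha)$, and reduction to the compact-supported main theorem) is precisely the intended argument, and you correctly flag the only genuine technical point---confinement of the orbits $\gamma_k$ to the region where the truncation agrees with $H$---which is exactly what the ``truncation tricks'' in \cite{STH} are meant to handle. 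One minor comment: your locality claim in Step~1 (that $\overline{H_1}(p_0)=\overline{H_2}(p_0)$ whenever $H_1,H_2$ agree near $\{p=p_0\}$) is stronger than what you actually need and not obviously true as stated, since orbits contributing to $\overline H(p_0)$ may have oscillating momentum; the cleaner route is the one you also sketch, namely monotonicity plus $\overline K=K$ for integrable $K$, which suffices both to define $\overline H$ as a stable limit of $\overline{H_R}$ and to prove its coercivity.
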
 
\begin{proof} Using Proposition 11.9 and 11.10 in \cite{STH}, we may extend the definition of the Homogenization to the coercive case. 
Superlinear means that for any positive $C$ there exists $A$ such that $H(q,p)\geq C \vert p \vert -A$. But this implies $\overline H(p)\geq C\vert p \vert -A$, hence $\overline H$ is superlinear. Now if $\overline H$ is superlinear, $p \mapsto \overline H(p)- \langle \alpha , p\rangle$ is bounded from below (use the inequality $\overline H(p)\geq C\vert p \vert -A$ with $C > \vert \alpha\vert$). It is easy to see that the minimum is achieved, since the set of $p$ such that $\overline H(p)- \langle \alpha , p\rangle \leq r$ is compact. At the minimum, $p_\alpha$, we have $\alpha \in \partial_C\overline H(p_\alpha)$. The rest of the proof is as in the compact-supported case by using a truncation as in \cite[section 11]{STH}. 
\end{proof} 
The main idea of the proof of Theorem 3.1 is to formulate the existence of intersection points in $\Phi ^k(\{q_{0}\}\times {\mathbb R} ^{n})\cap (\{q_{0}+k \alpha \}\times {\mathbb R} ^{n})$ as a  variational problem and apply   our heuristic principle -- i.e. that  a variational problem involving $H_{k}$ must converge to the variational problem involving $\overline H$. 

Another consequence of our methods will be the following result, implying the extension of the Conley conjecture to $T^*T^n$:
 \begin{theorem} \label{Thm-weak-Conley}
 Let us assume $H(t,q,p)$ is a compact supported Hamiltonian on $T^*T^n$.
 
 \begin{enumerate} [label=\theenumi]
 \item  Let $ \frac{u}{v} \in d_s\overline H (p)$ where $u\in {\mathbb Z}^n, v \in \mathbb N^*$ . Then there exists  a $v$-periodic orbit of $\varphi_H$ with rotation vector   $ \frac{u}{v}$, i.e.  a point $z$ such that $\Phi_H^{kv}(z)=z+ku$. 
  \item Assume $\overline H \not\equiv 0$. Then there exist infinitely many geometrically distinct non-contractible periodic orbits for $\varphi^1$. \item Assume $\overline H \equiv 0$ and $\varphi\neq \Id$. Then there exists infinitely many geometrically distinct contractible periodic orbits for $\varphi^1$ contained in the interior of the support of $H$, and moreover if the number of fixed points of $\varphi^1$ contained in the interior of the support of $H$ is finite,  for all positive $ \varepsilon $, we have for $N$ large enough $$\#\{x  \mid \varphi(x)\neq x,\; \text{and}\;\exists k \in [2,N] \mid  \varphi^k(x)=x\} \geq \frac{(1- \varepsilon )N^2}2{\log (N)} $$
 If we only count orbits (i.e. identify $x$ and $\varphi(x)$), then 
 $$\#\{ \text{orbits of}\; \varphi \mid card (\{\varphi^j(x) \mid j \in {\mathbb N} \})  \in [2,N] \} \geq\frac{(1- \varepsilon )N}{\log (N)}$$
 \end{enumerate} 

 \end{theorem} 
 
 Note that both cases: non-existence of contractible non-trivial periodic orbits\footnote{Think of the reparametrized geodesic flow for the flat metric given by $H(q,p)=h( \vert p \vert )$ where $h$ is constant near $0$, then strictly decreasing and then vanishing outside $[0,1]$.  Then its flow has only non-contractible periodic orbits outside its support.} and non-existence of non-contractible ones (for example if $\supp (H)$ is contractible) are possible.  Our result could be considered a generalization of the main result in \cite{Biran-Polterovich-Salamon}, where the first statement is proved under the assumption that $H$ is bounded from below on a certain Lagrangian submanifold. But this assumption implies, according to \cite{STH} that $\overline H$ is nonzero. 
 
The  Conley's conjecture proved by N. Hingston on $T^{2n}$ and on more general compact manifolds by V. Ginzburg (see \cite{Hingston}, \cite{Ginzburg}, \cite{GiG, Hei}) yields existence of infinitely many contractible periodic orbits for a Hamiltonian on $(M,\omega)$. For Lagrangian systems in the  cotangent bundle of a compact manifold, the analogous statement  was proved by Y. Long and G.Lu (\cite{Long-Lu}) for the torus and by G.Lu (\cite{Lu}) in the general case (see also \cite{Abbondandolo-Figalli}, and \cite{Mazzucchelli}) and by Hein (\cite{Hei2}) for asymptotically quadratic Hamiltonians.  
 
 \begin{remark} 
 If $\varphi^t(x)$ is an orbit of period $k$, we denote by $\nu(x,\varphi)$ the vector obtained by considering the  $q$ component of $\frac{1}{k}(\Phi^k(q,p)-(q,p)) \in \frac{1}{k} {\mathbb Z}^{2n}$. Then if $\overline H \not\equiv  0$, we shall prove (see Appendix \ref{Appendix-crit}, Section \ref{Section-9} as a consequence of Lemma \ref{fundlemma}) that the set of limit points of $\nu(x,\varphi)$ as $x$ belongs to the set of $k$-periodic orbits is a subset of ${\mathbb R} ^n$ of non-empty interior. 
 \end{remark} 
 
 A final comment is in order. In the convex case, Aubry-Mather theory makes two claims:
 
 \begin{enumerate}[label=\theenumi]
  \item existence of invariant measures with given rotation vectors
 \item  the support of this invariant measure is a Lipschitz graph over the base of the cotangent bundle
 \end{enumerate}  
 While we believe that the present work gives the right extension of the first statement to non-convex situations (for the moment only in $T^*T^n$ and not in a general cotangent bundle, see however \cite{Vichery3} for more general cotangent bundles and \cite{Bi} for general symplectic manifolds), we make no claim coming  close to the second statement. Of course if we start with a convex Hamiltonian, and compose it with a symplectic map, we get an invariant measure for the new Hamiltonian that is the image by the symplectic map of the invariant measure for the convex Hamiltonian, and this image has no reason to be a graph. 
 
  However other statements could make sense. One plausible conjecture is to look at the action of $\varphi^t$ over $\widehat {\mathcal L}$, the Humili\`ere completion of the set $\mathcal L$ of Lagrangians submanifolds for the $\gamma$-metric. Note that objects in $\widehat {\mathcal L}$ are not submanifolds, they are just objects in an abstract completion\footnote{However on can define their   support as a subset of the ambient symplectic manifold (see \cite{Humiliere}, section 4.4, page 390)}. Indeed, the group of Hamiltonian diffeomorphisms acts on this set (since a Hamiltonian diffeomorphism acts as an isometry for  $\gamma$, over $\mathcal L$, hence acts over its completion). If there is an element $L$, in $\widehat {\mathcal L}$ fixed by $\varphi^t$, then $L$ is not a Lagrangian, but the corresponding graph selector, $u_{L}(x)$ is again well-defined as a Lipschitz function, hence differentiable a.e. The set of points $(x,du_{L}(x))$ where $u_{L}$ is differentiable could then contain a set  invariant by the flow $\varphi^t$. 
 Note that the approach in this paper is very far from this conjecture, since we obtain the invariant measure as a limit of measures supported on trajectories, and there is no obvious way to make this into an element in $\widehat{\mathcal L}$. 
 \section{Acknowledgments}
 The author would thank the organizers and audience of the  MSRI conference ``Symplectic and Contact Topology and Dynamics: Puzzles and Horizons
'' and Edinburgh's ICMS Conference ``Symplectic Geometry and Transformation groups'' where these results were first presented in 2010.
 I thank the organizers for the opportunity to present this work. 
N. Vichery used  his joint work with A. Monzner and F. Zapolsky (see \cite{M-V-Z}) to extend the methods and some of the results of this paper to other cotangent bundles in \cite{Vichery3}. 
Finally  the paper has been much improved by the very careful reading and thoughtful suggestions of the referees and I am very grateful for their work. Whatever is left to blame in this paper is of course my own responsibility. 
 \section{Recollection of \texorpdfstring{$\gamma$-convergence}{gamma-convergence}}
 
 Note that $\gamma$-convergence corresponds to the convergence of critical values  obtained by minmax on some homology class for the action functional or rather one of its finite dimensional reductions. In  the case that $H$ is convex in $p$, hence has a Legendre dual $W(t,x,\dot x)$, $\gamma$-convergence implies the convergence of the critical values of the action functional $E(x)=\int_0^1 W(t,x,\dot x) dt$. This can be considered as a generalization of $\Gamma$-convergence, invented by E. de Giorgi (see \cite{DalMaso,Braides}), which in this case would  imply convergence of the minima of the action functional. 
 
 We now shortly summarize the definition of $\gamma$-convergence as well as the main results of symplectic homogenization. 
 
 Let $L$ be a Lagrangian submanifold Hamiltonianly isotopic to the zero section in the cotangent bundle $T^*N$ of a compact manifold $N$. According to \cite{Viterbo-STAGGF}, $L$ has a Generating Function Quadratic at Infinity (G.F.Q.I. for short) $S(q,\xi)$ defined on $N\times {\mathbb R} ^d$, such that
 \begin{enumerate} [label=\theenumi]
 \item the map $(q,\xi) \mapsto \frac{\partial S}{\partial \xi}$  has $0$ as a regular value
 \item $L=\{(q, \frac{\partial S}{\partial q} (q,\xi) \mid \frac{\partial S}{\partial \xi}(x,\xi)=0\}$
 \item There is a non-degenerate quadratic form $Q(\xi)$ such that $S(x,\xi)=Q(\xi)$ for $\vert \xi\vert $ large enough. 
 \end{enumerate} 
 
 This implies that for $c$ large enough, $H^*(S^c,S^{-c})$ (that we shall denote by $H^*(S^{+\infty}, S^{-\infty})$) is isomorphic to $H^{*-d^-}(N)$, where $d^-$ is the dimension of the negative eigenspace of $Q$. So for each class $\theta\in H^*(N)$ we denote by $T(\theta)$  its image in $H^*(S^{+\infty}, S^{-\infty})$, and we set
 
 $$c(\theta,S)=\sup\{ c \mid H^*(S^{+\infty}, S^{-\infty})\longrightarrow H^*(S^{c}, S^{-\infty})\; \text{sends}\; T(\theta)\; \text{to}\; 0\}$$
 
 and 
 
$$ \gamma(S)=c(\mu,S)-c(1,S)$$ where $1\in H^0(N)$ and $\mu_N\in H^n(N)$ is the fundamental class. 

We proved in \cite{Viterbo-STAGGF} that 
\begin{enumerate} [label=\theenumi] 
\item $\gamma (S)$ only depends on $L$. Henceforth we denote it by $\gamma(L)$
\item $\gamma(L)=0$ if and only if $L=0_N$
\end{enumerate} 

Note that  $T^*T^n\times \overline{T^*T^n}$ (where $\overline{T^*T^n}$ is the $T^*T^n$ with the symplectic form $-dp\wedge dq$), has a covering symplectically isomorphic to $T^*(T^n\times {\mathbb R} ^n)$ by the identification

$$(q,p,Q,P) \mapsto (q,P,p-P,Q-q)$$
(to make sense of $q-Q$ we must be on a covering)

As a result the graph of a compact supported Hamiltonian map $\varphi$ in $T^*T^n\times \overline{T^*T^n}$ has a covering that is a Lagrangian submanifold in $T^*(T^n\times {\mathbb R} ^n)$, coinciding with the zero section at infinity (since for  $P$ large enough $\varphi$ is the identity so $P-p=0, q-Q=0$).
We may compactify the graph of $\varphi$ to a Lagrangian in $T^*(T^n\times {\mathbb S}^n)$. This compactification  is denoted by $\Gamma_{\varphi}$ and  we set $\gamma(\varphi)=\gamma(\Gamma_\varphi)$. According to \cite{Viterbo-STAGGF}, we have 
 $$\gamma(\varphi\psi)\leq \gamma(\varphi)+\gamma(\psi)$$
 As a result, $d_\gamma(\varphi,\psi)=\gamma(\varphi\psi^{-1})$ defines an invariant distance on the group $\DHam_c(T^*T^n)$ of compact supported Hamiltonian diffeomorphisms of $T^*T^n$.
 
  The  completion of $({\DHam_c}(T^*T^n), \gamma)$ is a topological group, denoted by $\widehat{\DHam_c}(T^*T^n)$, also called the Humili\`ere completion (see \cite{Humiliere}). Note that any continuous Hamiltonian $H(t,q,p)$ has a flow in   $\widehat{\DHam_c}(T^*T^n)$, but this flow
 does not operate on $T^*T^n$ (i.e. does not induce maps on $T^*T^n$). 
 
 In \cite{STH} we proved  that the sequence $k \mapsto H(k t, k q, p)$ $\gamma$-converges to some continuous Hamiltonian $\overline H(p)$, and if $H$ is Lipschitz, so is $\overline H$. 
  The following summarizes the main results of \cite{STH}, that is theorems 4.1 and 4.2 :
  \begin{theorem}[Symplectic homogenization]  \label{Thm-3.1} \hskip 0pt
Let $H(t,q,p)$ be a compactly supported, $C^{2}$-Hamiltonian, $1$-periodic in $t$ on  $T^*T^n$. Then the following holds:
 \begin{enumerate} [label=\theenumi]
\item \label{3.1-1} There exists a Hamiltonian $\overline H \in C_{c}^{0,1}({\mathbb R}^n, {\mathbb R} )$  such that  the sequence $H_{k} (t,q,p) = H(kt,kq,p)$ $\gamma$-converges to
$\overline{H}(t,q,p)= \overline H(p)$.

\item \label{3.1-2} The function $\overline H$ only depends on $\varphi^1$, the time-one map associated to $H$
(i.e. it does not depend on the isotopy $(\varphi^t)_{t\in [0,1]}$).

\item \label{3.1-3}The map
$$
\mathcal{A} : C_{c}^{2} ([0,1]\times T^{*} T^{n} , \mathbb{R}) \to C_{c}^{0}
(\mathbb{R}^{n},\mathbb{R})
$$
defined by $\mathcal{A}(H) = \overline{H}$ extends to a nonlinear projector (i.e.   satisfying ${\mathcal A}^2= {\mathcal A}$) with Lipschitz constant 1
$$
\mathcal{A} : \widehat{\Ham_c}( T^{*} T^{n})\to
C_{c}^{0}(\mathbb{R}^{n},\mathbb{R})
$$
where $\widehat{\Ham_c}$ is the completion of $C_{c}^{2} ([0,1]\times T^{*} T^{n} , \mathbb{R})$ for the metric  $\gamma$, and the metric on
$C^{0}(\mathbb{R}^{n},\mathbb{R})$ is the $C^{0}$ metric.

\item \label{3.2-1}The map $\mathcal{A}$ is monotone, i.e. if $H_1 \leq H_2$, then ${\mathcal A}(H_1) \leq {\mathcal A}(H_2)$.

 \item \label{3.2-3} It is invariant under the action of a  Hamiltonian symplectomorphism: \newline${\mathcal{A}} (H\circ \psi)= {\mathcal{A}}(H)$ for all $\psi \in {\DHam_c}(T^*T^n)$. 

 \item \label{3.2-5}  We have ${\mathcal A}(-H)=-{\mathcal A}  (H)$. 

\item \label{3.2-2b}  If $L$ is a Lagrangian Hamiltonianly isotopic to $L_{p_{0}}=\{(q,p_{0})\in {\rm T^*T^{n}}\} $ and $H_{\mid L}\geq h$ (resp. $\leq h$) we have ${\mathcal A}(H)(p_{0})\geq h$ (resp. $\leq h$).

 \end{enumerate}
 \end{theorem}

 \section{Aubry-Mather theory for non-convex Hamiltonians}\label{Section-4}
Let us notice that the original version of this paper, from 2010, had a more complicated proof for Theorem \ref{main-theorem}, which relied on Theorem \ref{2.1}.
  
\begin{proof}[Proof of theorem \ref{main-theorem}]  Let  $S$ be a Generating Function Quadratic at Infinity (G.F.Q.I.)  on a product base manifold, $S: X\times Y\times E \longrightarrow {\mathbb R} $, where $E={\mathbb R}^d$. We denote by $S_y$ the restriction of $S$ to $X\times \{y\}\times E$. For $L$ a Lagrangian submanifold in $T^*(X\times Y)$, we denote by $L_y$ the symplectic reduction  $(L\cap (T^*X \times T_y^*Y)) / T_y^*Y$. 
 
 Remember from \cite{STH} that $\overline H$ is the uniform limit of the sequence of functions $(h_k)_{k\geq 1}$  defined as follows. First $\Gamma_k$ is the Lagrangian submanifold in $T^*(T^n\times {\mathbb S}^n)$ defined as the compactification of 
 $$\{(q,P_k(q,p),p-P_k(q,p), Q_k(q,p)-q)\mid \Phi_k(q,p)=(Q_k(q,p),P_k(q,p))\}$$   $\Phi_k=\rho_k^{-1}\Phi^k\rho_k$  where $\rho_k(q,p)=(k\cdot q,p)$. Then we write $\Gamma_{k,y}$ for  $(\Gamma_k)_y$ the reduction of $\Gamma_k$ at $y$ and  we define $h_k(p)=c(\mu_q,\Gamma_{k,p})$, where  $\mu_q$ is the fundamental class in $T^n$.

 Note that we may assume $\alpha \neq 0$, since otherwise we can take $p_k=p$ in a region $\{(q,p) \mid \vert p \vert \geq r\}$ for $r$ large enough, so that $H$ and ${\overline{ H}}$ vanish and the Theorem  is obvious.  
 
The idea of the proof is that at differentiability points for $h_k$, we can construct an invariant measure  by the usual method of renormalized orbits (i.e. the Krylov-Bogolioubov procedure). Then we shall see that at the other points, we can obtain the invariant measure by taking limit of subsequences and convex hulls of the measures corresponding to points of differentiability of the $h_k$. 

 Note that if $S_k(q,P,\xi)$ is a G.F.Q.I. for $\Gamma_k\subset T^*(T^n\times {\mathbb S}^n)$, $h_k(P)$ is defined as the critical value associated to the class $\mu_q$ of the function $(q,\xi) \mapsto S_k(q,P,\xi)$ (see \cite{STH}). If  the selector defining $h_k$ is smooth at $P_k$, i.e. there is a smooth map $P\mapsto (q(P),\xi(P))$ defined in a neighborhood of $P_k$, such that 
 $$ \frac{\partial S_k}{\partial q}(q(P),P,\xi(P))=0= \frac{\partial S_k}{\partial \xi}(q(P),P,\xi(P))$$ and $S_k(q(P),P,\xi(P))=h_k(P)$,  we 
 have $$\alpha_k = dh_k(P_k)=\frac{\partial S_k}{\partial P}(q(P_k),P_k,\xi(P_k))$$ so that the point of $\Gamma_k$ corresponding to 
 $(q(P_k),P_k,\xi(P_k))$ is $(q(P_k), P_k, 0, \alpha_k)$, which translates into $\Phi_k(q(P_k),P_k)=(q(P_k)+\alpha_k, P_k)$, hence $
 \Phi^k(k\cdot q(P_k), P_k)=(k\cdot q(P_k)+k\cdot \alpha_k, P_k)$, and the trajectory $\gamma_k=\{ \varphi^{kt}(q(P_k),P_k) \mid t\in 
 [0,1]\}$ yields a normalized measure $\meas_k$ on $T^*T^n$, such that $\vert (\varphi^1)_*(\meas_k)-\meas_k\vert \leq \frac{2}{k} $  
 (where $\vert \meas-\meas'\vert= \sup_{A\subset T^*T^n} \vert \meas (A)-\meas'(A) \vert$ is the total variation distance of the 
 probabilities $\meas, \meas'$). As a result choosing a subsequence $(P_k)_{k\geq 1}$ such that $\lim_k P_k=p_\infty$,  the limit of $
 \meas_k$ is an invariant measure $\meas$ such that $r(\meas)=\alpha$ and $A(\meas)=\lim_k P_k\cdot \alpha_k-
 h_k(P_k)=p_\infty\cdot \alpha - \overline H(p_\infty)$.

 In general the selector $h_k$ is only smooth outside a closed set of zero measure (see \cite{Ottolenghi-Viterbo} and  Appendix 2, Section \ref{Appendix-11} ) but $h_k$ and $\overline H$ are  Lipschitz since the Lipschitz constant of $h_k$ is bounded by 
 the maximum norm of $Q_k(q,p)-q$ and this is in turn bounded by the maximum of $ \vert \frac{\partial H}{\partial p} \vert $.
   
   Now if $\alpha_k \in \partial_Ch_k(p_k)$, then $(p_k, \alpha_k)$ belongs to ${\rm{LConv}}_{T^n,q}(\Gamma_k)$ a subset of ${\rm Conv}_{T^n,q}(\Gamma_k)$, the fiberwise convex hull of $\Gamma_k$ : we refer to Definition \ref{Definition-10.19}  for the definition of ${\rm{LConv}}_{T^n,q}(\Gamma_k)$ and to  Proposition \ref{Prop-10.17} for the statement. More precisely  ${\rm LConv}_{T^n,q}(\Gamma_k)$ is the set of $(q,P, P-p,q-Q)$ where $q-Q$ is in the convex hull of the set of $q-Q_k(q,p)$ such that $P_k(q,p)=P$ and $\langle P_k, \alpha_k\rangle-h_k(P_k)=A$ is fixed. This means that there are $\alpha_k^j$ such that $(q_j(p_k),p_k, 0, \alpha_k^j)\in \Gamma_k$ and $\alpha_k$ is in the convex hull of the $\alpha_k^j$,  in other words that  $\Phi^k(k\cdot q_j(p_k),p_k)=(k\cdot q_j(p_k)+k\cdot \alpha_k^j,p_k)$. 
 
By Caratehodory's theorem a point in the convex hull of a set in $\mathbb R^n$ is in the convex hull of a subset of cardinality $n+1$. We may thus limit ourselves to $1\leq j \leq n+1$ and by taking subsequences, we can assume that if $\alpha=\lim_k \alpha_k$ we have $\alpha^j= \lim_k \alpha_k^j$, and $\alpha$ is in the convex hull of the $\alpha^j$.
 
This proves that any $\alpha \in \limsup_k \partial_C h_k(p_k)$ is the rotation number of an invariant  measure for $\varphi^1$. 
Its action is obtained by noticing that  $\Phi^k(k\cdot q_j(p_k),p_k)=(k\cdot q_j(p_k)+k\cdot \alpha_k^j,p_k)$ corresponds to the intersection of $\Gamma_k$ and the Lagrangian $\Lambda_{\alpha_k^j}= \{(q,P, 0, \alpha_k^j) \mid q\in T^n, P\in {\mathbb R}^n \}$. Note that  $\Lambda_\alpha$ has generating function $\langle P, \alpha\rangle$. So the intersection is given by the critical points of $S_k^\alpha(q,P;\xi)=\langle P, \alpha\rangle - S_k(q,P;\xi)$, and then $c(\mu_q, S_k^\alpha)= \langle p_k, \alpha \rangle-h_k(p_k)$. Here the sign of $S_k^\alpha$ comes from the fact that on $T^* {\mathbb R}^n \times \overline{T^* {\mathbb R}^n}$ the symplectic form  is $dp\wedge dq-dP\wedge dQ$ and not $dP\wedge dQ-dp\wedge dq$. As a result, setting $p_\infty=\lim_kp_k$,  the limit of the critical value as $k$ goes  to infinity will be  $\langle p_\infty,\alpha \rangle - \overline H(p_\infty)$.  

 Now given $\alpha \in \partial_C{\overline H}(p_\infty)$, 
it follows from  \cite{Jourani} (thm 3.2, using the fact that by thm 2.1, ibid, for Lipschitz functions,  $\partial_Af=\partial_Gf$),   that, up to taking subsequences, we may assume,   $\lim_k p_k=p_\infty$, we have $$\partial_C{\overline H}(p_\infty)\subset  \limsup_k \partial_C h_k(p_k)$$ (see  Appendix \ref{Appendix-crit}, Prop\ref{Prop-12.4}).
 Thus we have  sequences $\alpha_k^j$ in $\partial_Ch_k(p_k)$ such that $\alpha$ is in the convex hull of the $\alpha_\infty^j$ where $\alpha_\infty^j=\lim_k \alpha_k^j$ up to taking subsequences. 
  Setting  $\gamma_k^j=\{ \varphi^{kt}(q_j(P),P) \mid t\in [0,1]\}$, then  $\frac{1}{k}[\gamma^j_k]$ -the probability measure that is the direct image by $t\mapsto \varphi^t(q_j(P),P)$ of the normalized Lebesgue measure on $[0,k]$-  converges  to the probability measure $\meas_j$, with rotation vector $\alpha_j$ and action $\langle p_\infty,\alpha_j\rangle - \overline H(p_\infty)$, so the action of the convex hull of these measures contains a measure with rotation vector $\alpha$ and action $\langle p_\infty, \alpha\rangle - \overline H(p_\infty)$.
  \end{proof}

 \section{Strong convergence in Symplectic homogenization}
 
 Let $S$ be a \GFQI for the Lagrangian $L$ in $T^*N$. Provided $L$ is Hamiltonianly isotopic to the zero section, we know that up to equivalence $S$ is unique (see  \cite{Theret, Viterbo-STAGGF}), hence the Generating Function homology defined in \cite{Traynor} as $$GH_*(L; a, b) =H_{*-i}(S^b, S^a)$$ (resp. cohomology $GH^*(L; a, b) =H^{*-i}(S^b, S^a)$)
 where $i$ is the index of the quadratic form defined by $S$ at infinity, is well defined and only depends on $L$, not on the choice of $S$. It was proved in \cite{Viterbo-FCFH2} that Generating Function homology (resp. cohomology) coincides with the Floer homology $FH_*(L,0_N;a,b)$ (resp. $FH^*(L,0_N;a,b)$) defined in \cite{Floer-Morse-index, Floer1, Floer2, Floer-Witten}. We shall from now on use the notation $FH_*$ (resp. $FH^*$) even though we shall often consider it as Generating function homology (resp. cohomology).  
 
 Similarly if $L_1,L_2$ have \GFQI $S_1, S_2$, we set $(S_1\ominus S_2)(x;\xi,\eta)=S_1(x;\xi)-S_2(x;\eta)$ and $$FH_*(L_1,L_2;a,b)=H_{*-i}((S_\ominus S_2)^b, (S_\ominus S_2)^a).$$
 In the sequel we shall omit the grading shift by $i$. 
 
 The goal of this section is to improve the convergence result of \cite{STH}. Indeed, we require that for a sequence $S_k$ of \GFQIs  for $\varphi_k(L)$, not only the critical values of $S_k$ corresponding to certain homology classes converge, but that all ``homological critical values" (that is $c$ such that $\lim_{ \varepsilon \to 0}H_*(S_k^{c+ \varepsilon }, S_k^{ c- \varepsilon} )\neq 0$) converge to ``homological critical values'' of $\overline S$, the \GFQI for $\overline {\varphi} (L)$. This is what we mean by $h$-convergence, and proving $h$-convergence of $\varphi_k(L)$ to $\overline \varphi (L)$ will yield a sequence of closed orbits of rotation number $\alpha_k$ converging to $\alpha$ for all $\alpha$ in $d_s\overline H (p)$ (see Definition \ref{Def-9.14}), and such that the actions also converge. This will be explained in detail in section \ref{section-6}. 
 
 Remember from Section \ref{Section-4} that we defined the sequence $\varphi_k=\rho_k^{-1}\varphi^k\rho_k$ where $\rho_k(q,p)=(k\cdot q,p)$ and even though  $\rho_k^{-1}$ is not well-defined, but $\varphi_k$ is well defined if $\varphi$ is Hamiltonianly isotopic to the identity, as the unique solution of $\rho_k\varphi_k=\varphi^k\rho_k$ obtained by continuation starting from $\varphi=\varphi_k=\Id$.

We shall now prove the following theorem, which is a refinement  of $\gamma$-convergence
 
 \begin{theorem}   \label{2.1}
Let $a<b$ be real numbers,  $L_{1},L_{2}$ be Lagrangian submanifolds Hamiltonianly isotopic to the zero section. There exist a sequence $( \varepsilon _{k})_{k \geq 1}$ converging to zero, and  maps $$ i_{k}^{a,b}: FH_{*}(\varphi_{k}(L_{1}),L_{2}; a,b) \longrightarrow FH_{*}(\overline\varphi (L_{1}),L_{2}; a+ \varepsilon _{k},b+  \varepsilon _{k})$$ and $$j_{k}^{a,b}:FH_{*}(\overline\varphi (L_{1}),L_{2}; a,b) \longrightarrow FH_{*}(\varphi_{k} (L_{1}),L_{2};a+ \varepsilon _{k},b+ \varepsilon _{k})$$ such that the maps $$i_{k}^{a+ \varepsilon _k,b+ \varepsilon _k}\circ j_{k}^{a,b}: FH_{*}(\overline\varphi  (L_1),L_2; a,b)  \longrightarrow  FH_{*}(\overline\varphi (L_1),L_2 ; a+ 2\varepsilon _{k},b+ 2\varepsilon _{k})$$ and 
 $$j_{k}^{a+ \varepsilon _k,b+ \varepsilon _k}\circ i_{k}^{a,b}: FH_{*}(\varphi_k(L_1),L_2; a,b)  \longrightarrow  FH_{*}(\varphi_k (L_1),L_2 ; a+ 2\varepsilon _{k},b+ 2\varepsilon _{k})$$
converges to the identity as $k$ goes to infinity.
Moreover the maps $i_{k}^{a,b}, j_{k}^{a,b}$ are natural, that is the following diagrams are commutative for $a<b$, $c<d$ satisfying $a<c, b<d$

$$
\xymatrix
{FH_*(\varphi_{k} (L_{1}),L_{2}; c ,d)\ar[d] \ar[r]^-{i_{k}^{c,d}}&
FH_*(\overline\varphi (L_{1}),L_{2}; c+ \varepsilon _{k},d+ \varepsilon _{k})\ar[d] \\
FH_*(\varphi_{k} (L_{1}),L_{2}; a ,b)\ar[r]^-{i_{k}^{a,b}} &  FH_*(\overline\varphi (L_{1}),L_{2}; a+ \varepsilon_{k},b+ \varepsilon _{k})  }
$$

$$
\xymatrix
{
FH_*(\overline\varphi (L_1),L_2; c,d)\ar[d] \ar^-{j_{k}^{c,d}}[r]& FH_*(\varphi_{k} (L_1),L_2; c+ \varepsilon_k ,d+ \varepsilon _k)\ar[d]  \\
 FH_*(\overline\varphi (L_1),L_2; a,b) \ar[r]^-{j_{k}^{a,b}} & FH_*(\varphi_{k} (L_{1}),L_{2}; a+ \varepsilon _{k},b+ \varepsilon _{k})  }
$$
where the vertical maps are the natural maps. 
\end{theorem}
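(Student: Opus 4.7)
The plan is to extract the desired filtered continuation maps from the $\gamma$-convergence established in \cite{STH}. I would set $\varepsilon_{k} := \gamma(\varphi_{k}, \overline\varphi)$; by the Symplectic Homogenization theorem, $\varepsilon_{k} \to 0$ as $k \to \infty$. The main principle to invoke is that a $\gamma$-small perturbation induces a continuation morphism in filtered Lagrangian Floer cohomology whose action shift is controlled by the $\gamma$-distance: whenever $\gamma(\phi_{1}, \phi_{2}) \le \delta$, there are canonical maps
$$FH^{*}(\phi_{1}(L_{1}), L_{2}; a, b) \longrightarrow FH^{*}(\phi_{2}(L_{1}), L_{2}; a + \delta, b + \delta),$$
natural in the window $[a,b]$ and multiplicative with respect to composition of isotopies. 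These arise from the standard interpolating homotopy whose positive and negative action costs are each controlled by $\delta$.

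Next I would handle the fact that $\overline\varphi$ lies only in the completion $\widehat{\DHam}(T^{*}T^{n})$. I would define $FH^{*}(\overline\varphi(L_{1}), L_{2}; a, b)$ as the stable value of the filtered Floer cohomologies along any sequence of genuine Hamiltonian diffeomorphisms $\psi_{m}$ with $\gamma(\psi_{m}, \overline\varphi) \to 0$. For $m, m'$ large, the continuation morphisms between $FH^{*}(\psi_{m}(L_{1}), L_{2}; a - o(1), b + o(1))$ and $FH^{*}(\psi_{m'}(L_{1}), L_{2}; a - o(1), b + o(1))$ stabilize, so the limit is unambiguous; this is essentially the content of Humili\`ere's extension, \cite{Humiliere}. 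With this in hand, $i_{k}^{a,b}$ is obtained by composing the continuation map from $\varphi_{k}$ to a well-chosen $\psi_{m}$ with the stabilization map into $\overline\varphi$; the total action cost is at most $\gamma(\varphi_{k}, \psi_{m}) + o(1) \le \varepsilon_{k} + o(1)$, and the correction can be absorbed into a harmless redefinition of $\varepsilon_{k}$. The map $j_{k}^{a,b}$ is built symmetrically.

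Naturality of $i_{k}^{a,b}$ and $j_{k}^{a,b}$ under action-window inclusions is automatic once the maps are built from continuation morphisms, because continuation always commutes with the filtration-truncation maps; this gives both commutative diagrams in the statement. For the composition $i_{k}^{a + \varepsilon_{k}, b + \varepsilon_{k}} \circ j_{k}^{a, b}$, multiplicativity of continuation under concatenation of isotopies identifies it, up to the action shift $2\varepsilon_{k}$, with the continuation map induced by the loop $\overline\varphi \leadsto \varphi_{k} \leadsto \overline\varphi$ in $\widehat{\DHam}$. Homotopy invariance of continuation then pins this down as the tautological inclusion
$$FH^{*}(\overline\varphi(L_{1}), L_{2}; a, b) \longrightarrow FH^{*}(\overline\varphi(L_{1}), L_{2}; a + 2\varepsilon_{k}, b + 2\varepsilon_{k}),$$
which converges to the identity as $k \to \infty$ by continuity of filtered Floer cohomology in the action window.

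I expect the main obstacle to be the second step, namely rigorously defining $FH^{*}(\overline\varphi(L_{1}), L_{2}; a, b)$ as a stable value over approximations in the completion, and verifying that the continuation maps from honest Hamiltonian diffeomorphisms descend to it with the sharp action shift. Concretely, one must show that when $\gamma(\psi_{m}, \psi_{m'})$ is small, the continuation map between the corresponding filtered cohomologies becomes an isomorphism after an action shift of the same order; otherwise the source and target of $i_{k}^{a,b}$ and $j_{k}^{a,b}$ would depend on the chosen approximating sequence. Once this persistence-type statement is in place, using only the $\gamma$-control on continuation morphisms, the rest of the argument is a formal manipulation of continuation maps and homotopy invariance.
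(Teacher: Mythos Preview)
Your approach has a genuine gap at its very first step. You take as a ``main principle'' that whenever $\gamma(\phi_1,\phi_2)\le\delta$ there exist continuation maps
\[
FH^{*}(\phi_1(L_1),L_2;a,b)\longrightarrow FH^{*}(\phi_2(L_1),L_2;a+\delta,b+\delta)
\]
with action shift $\delta$, and you justify this by saying they ``arise from the standard interpolating homotopy whose positive and negative action costs are each controlled by $\delta$.'' But the standard interpolating homotopy controls the action shift by the \emph{Hofer} distance, not the $\gamma$-distance. The $\gamma$-metric can be arbitrarily smaller than the Hofer metric, and in the homogenization setting this is exactly what happens: the sequence $\varphi_k$ $\gamma$-converges to $\overline\varphi$, but there is no reason for Hofer convergence. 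Upgrading $\gamma$-convergence to the existence of interleaving maps with $\gamma$-controlled shift is precisely the content of Theorem~\ref{2.1}; the paper even names this strengthening ``$h$-convergence'' to distinguish it from mere $\gamma$-convergence. So your argument is circular: you are invoking the conclusion as a hypothesis.

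The paper's proof is accordingly quite different from an abstract continuation argument. It works directly with generating functions and exploits the specific structure of the limit: $\overline\varphi$ is the flow of an integrable $C^0$ Hamiltonian $\overline H(p)$, approximated by the selectors $h_k(y)=c(\mu_x\otimes 1_y,F_k)$. Because $h_k$ is a minimax value, for each $y$ there are cycles $C^\pm(y)$ on which $F_k$ is pinched near $h_k(y)$; the map $j_k$ is then built by an explicit fibre product $\Gamma\times_Y C^-$ of a cycle $\Gamma$ for $\overline G_k$ with these cycles, and non-triviality is checked by Alexander duality against $\Gamma'\times_Y C^+$. The main technical obstacle is that $y\mapsto C^\pm(y)$ need not be continuous; this is handled by a grid-and-iteration argument (the $\chi_j^\delta$ and the $\ell$-fold composition), exactly as in \cite{STH}. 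The reverse map $i_k$ is obtained by swapping $L_1,L_2$ and replacing $\varphi$ by $\varphi^{-1}$, which crucially uses the identity $\overline{-H}=-\overline H$ from \cite{STH}. None of this is captured by an appeal to generic continuation maps.
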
 

\begin{remarks} \label{2.2} \begin{enumerate} [label=\theenumi]
 \item
Note that $\overline\varphi(L)$ is the limit of $\varphi_k(L)$ in the completion $\widehat {\mathcal L}$. The fact that $\varphi_k$ converges to $\overline \varphi$ in $\widehat \DHam(T^*T^n)$ implies easily that $\varphi_k(L)$ has a limit in $\widehat{\mathcal L}$ denoted $\overline\varphi(L)$. 

In this situation, we proved in  \cite{Viterbo-FCFH1} that  $FH^*(\overline\varphi (L);a,b)$ is well defined. We shall be more precise about that in section \ref{Integrable-Hamiltonians}.

\item \label{2.2.c} We will show that the maps $$i_{k}^{a+ \varepsilon _k,b+ \varepsilon _k}\circ j_{k}^{a,b}: FH_{*}(\overline\varphi  (L_1),L_2; a,b)  \longrightarrow  FH_{*}(\overline\varphi (L_1),L_2 ; a+ 2\varepsilon _{k},b+ 2\varepsilon _{k})$$ and 
$$j_{k}^{a+ \varepsilon _k,b+ \varepsilon _k}\circ i_{k}^{a,b}: FH_{*}(\varphi_k(L_1),L_2; a,b)  \longrightarrow  FH_{*}(\varphi_k (L_1),L_2 ; a+ 2\varepsilon _{k},b+ 2\varepsilon _{k})$$
 coincide with the natural map induced by the inclusions of the sublevel sets (or chain complexes). Therefore they obviously converge to the identity as $ \varepsilon_k$ goes to $0$. 
\item There is of course by Poincar\'e duality a similar result in cohomology, except the arrows are going in the opposite direction. 
\item \label{2.2.b} The results in \cite{STH} imply that $\varphi_{k}\times \Id $ $\gamma$-converges to $\overline \varphi \times \Id$, so if $L$ is the graph of a Hamiltonian map, $\psi$, we get that the result in the Theorem still holds with $\varphi_{k}(L)$ and $\overline \varphi (L)$ replaced by $\varphi_{k}\psi$  and $\overline \varphi \psi$. 
It is easy to apply the above definition to prove that $h$-convergence implies $\gamma$-convergence. Indeed, for each class $\alpha$ we have that $h$-convergence of $L_k$ to $\overline{L}$ implies that $c(\alpha, L_k, \overline L)$ goes to zero. As a consequence $c(\alpha, \varphi_k\overline \varphi^{-1}) \longrightarrow 0$ and this implies $\gamma$ convergence of $\varphi_k$ to $\overline \varphi$. 
\end{enumerate} 
\end{remarks}

To prove  Theorem \ref{2.1}, we shall use Lisa Traynor's version of Floer homology as Generating function homology (see \cite{Traynor} for the definition and \cite{Viterbo-FCFH2} for the proof of the isomorphism between Floer and Generating Homology).
This means that if $S$ is a \GFQI for $L$ then up to a shift in grading, we have $H^*(S^b, S^a)=FH^*(L;a,b)$. The left-hand side is Generating Function cohomology and according to a result by Th\'eret and the author does not depend on the choice of $S$. The  reader could take this as a definition of Floer cohomology in our setting, but we shall occasionally use properties of Generating Function cohomology that are stated in the literature in the framework of  Floer cohomology.

 We will in fact compare the relative homology 
of generating functions corresponding to $\overline \varphi (L)$ and $\varphi_{k}(L)$. 
 As in \cite{STH} we consider   $S_{1}(x,\eta_{1})$, $S_{2}(x,\eta_{2})$, GFQI respectively for  $L_{1}$ and $L_{2}$, and  $F_{k}(x,y,\xi)$ a GFQI for $\varphi_{k}$ so that  $$\varphi_{k}\left (x+\frac{\partial F_{k}}{\partial y} (x,y,\xi) ,y \right )=\left (x,y +\frac{\partial F_{k}}{\partial x}(x,y,\xi)\right ) \Leftrightarrow \frac{\partial F_{k}}{\partial \xi}(x,y,\xi)=0$$
 and then  we obtain a generating function of $\varphi_{k}(L_{1})-L_{2}$ $$G_{k}(x;y,u,\xi, \eta)=S_{1}(u;\eta_1)+F_{k}(x,y,\xi)+ \langle y,x-u\rangle -S_{2}(x,\eta_{2}).$$ 
 where $\eta=(\eta_1,\eta_2)$.
 Similarly if $h_{k}(y)=c(\mu_{x},F_{k,y})$ and $\overline\varphi_{k}$ the flow of the integrable Hamiltonian $h_{k}$ we have the following ``generating function'' of $\overline\varphi_{k}(L_{1})-L_{2}$
  $$\overline G_{k}(x;y, u,\eta)=S_{1}(u;\eta_{1})+h_{k}(y)+ \langle y,x-u\rangle -S_{2}(x; \eta_{2})$$

We will also set ($k=+\infty$)
 $$\overline G_{}(x;y, u,\eta)=S_{1}(u;\eta_{1})+\overline H(y)+ \langle y,x-u\rangle -S_{2}(x; \eta_{2}).$$
and this can be considered as a ``sui generis'' generating function for $\overline\varphi(L)$, the important fact is that by $FH_*(\overline\varphi(L_1),L_2;a,b)$ we mean
$H_{*-i}(\overline G;a,b)$.  

First of all we shall use the explicit form for $F_k$ given by 
\begin{definition}\label{4.5} Let $S$ be a \GFQI for $\varphi$. We then set 
 \begin{gather*}  \label{def-F} F_k(x,y;\xi, \zeta)= \frac{1}{k}\left [ S(kx,p_{1}; \zeta_1)+ \sum_{j=2}^{k-1} S(kq_{j},p_{j}; \zeta_j) + S(kq_{k},y;\zeta_k)\right ]+B_{k}(x,y,\xi)\end{gather*} where $\zeta = (\zeta_1, ..., \zeta_k)$ and
\begin{gather*} 
B_{k} (q_{1}, p_{k}; p_{1} , q_{2} ,\cdots, q_{k-1} , p_{k-1} ,
q_{k}) = B_k(q_1,p_k;\xi)=\\
\sum_{j=1}^{k-1} \langle p_{j}, q_{j+1} - q_{j} \rangle +
\langle p_{k} , q_{1}-q_{k}\rangle \ .
\end{gather*} 
We then define $E_k^\pm$ to be the positive and negative eigenspace of the quadratic form asymptotic to $F_k$ at infinity. We finally  set ${h}_{k} (y) = c (\mu_{x} , {F}_{k,y})$ where
${F}_{k,y}(x;\xi, \zeta) = {F}_{k} (x,y; \xi, \zeta)$. 
\end{definition}

In the next lemma we assume we are given open sets  $V_j^\delta\subset U_j^\delta$ in $ {\mathbb R}^n$ for $j\in [1,\ell]$ such that 
\begin{enumerate} [label=\theenumi] 
\item the diameter of the connected components of the complement of $V_j^\delta$ is less than $\delta$
\item the intersection of any $n+1$ distinct $U_j^\delta$ is empty
\end{enumerate} 
The construction of the $U_j^\delta$ is easy and can be found in \cite{STH}, proof of proposition 5.3.
We define $\chi_j^\delta$ to be smooth functions supported in $U_j^\delta$ and equal to $1$ on $V_j^\delta$.
In  the rest of the paper, $H_*$ will designate singular homology with coefficients in some field. 

 Since $h_{k}(y)=c(\mu_{x} ,{F}_{k,y})$, this means there exists a cycle ${\widetilde C}^-(y)$ with $[{\widetilde C}^-(y)]=[T^{n}_{x}\times E_{k}^-]$ in $H_{*}(F_{k,y}^\infty,F_{k,y}^{-\infty})$, and 
  $$h_{k}(y)- \varepsilon \leq \sup_{(x,\xi)\in {\widetilde C}^-(y)}F_{k}(x,y,\xi) \leq h_{k}(y)+ \varepsilon$$
  
  Unfortunately we cannot choose ${\widetilde C}^-(y)$ to depend continuously on $y$.

The next Proposition will yield the existence of a continuous family $C^{-}(y)$ that coincides with ${\widetilde C}^-(y)$ outside a small set of $y$ given by one of the $U_j^\delta$. 
\begin{proposition} 
Let $\overline \Gamma$ be a cycle in $H_{*}(\overline G_{k}^b, \overline G_{k}^a)$. Then there exists a sequence $ \varepsilon _{k}$ of positive numbers converging to $0$ and a continuous family of cycles $y \mapsto C^-(y)$, where $C^{-}(y)$ is a cycle homologous to $T^n_{x}\times \{y\}\times E_{k}^-$ in $H_*(F_{k,y}^{+\infty},F_{k,y}^{-\infty})$  such that for some constant $M$ we have 
$$F_{k}(x,y,\xi, \zeta) \leq h_{k}(y)+M\chi^j_{\delta}(y)+ \varepsilon_{k}$$ whenever $(x,\xi, \zeta)\in C^-(y)$.

\end{proposition} 

\begin{proof} 
  
We remind the reader of the Definition\footnote{We shall extend this definition in Appendix \ref{Appendix-crit}, Definition \ref{Definition-PS2} for the case where $F$ is only Lipschitz.}
\begin{definition} A smooth function $F$ on $X$ satisfies the Palais-Smale condition ((PS) condition for short) if any sequence $(x_n)_{n\geq 1}$  such that $F(x_{n})$ is bounded and $\nabla F(x_{n})$ converges to zero has a converging subsequence.
\end{definition} 
 This implies that the flow of $\frac{\nabla F}{ \vert \nabla F (x) \vert^{2} }$
is defined for all times outside a neighborhood of the critical points (which moreover form a compact set). In particular, classes in $H^{*}(F^{b}, F^{a})$ are represented by linear combinations of unstable manifolds of critical points (see for example \cite{Laudenbach-Bismut}). As usual we denote by $F^{\infty}$ (resp. $F^{-\infty}$) the set $F^{c}=\{ x \mid F(x) \leq c \}$ for $c$ large.  
The Proposition is then an obvious consequence of the following
\begin{lemma}
Let $F(u,x)$ be a smooth function on the product  $V\times X$ of two oriented manifolds. We moreover assume both $F$ and its restriction $F_{u}$ to a fiber $\{u\}\times X$ satisfy the Palais-Smale condition and $F_{u}$ does not depend on $u$ for $u$ outside a compact set. 

Let  $f\in C^0(V, {\mathbb R} )$ be such that  for each $u \in V$, there exists a cycle $C(u)$ representing a class in $ H_{d}(F_{u}^{+\infty},F_{u}^{-\infty})$
with $F(u, C(u))\leq f(u)$.  
Moreover we assume that $H_{k}(F_{u}^{\infty}, F_{u}^{-\infty})$ vanishes for $k\geq d+1$. 

Then for any positive $ \varepsilon $ and any subset $U$ in $V$, such that each connected component of $V\setminus U$ has sufficiently small  diameter, there exists a cycle $\widetilde C$ in $H_{d+\dim(V)}(F^{\infty}, F^{-\infty})$ and a constant $M$ such that if we denote by $\widetilde{C}(u)$ the slice $\widetilde C \cap \pi^{-1}(u)$ ($\pi:V\times X \longrightarrow X$ is the second projection)
$$F(u,{\widetilde C}(u)) \leq f(u)+ M \chi_{U}(u)+ \varepsilon $$ where $\chi_U$ is the characteristic function of $U$.
\end{lemma}

\begin{proof} This is lemma 5.1 in \cite{STH}.
\end{proof} 
\end{proof} 
\begin{proof} [Proof of Theorem \ref{2.1}]

  We thus return to our original problem, and consider a continuous family of cycles ${ C}^-(y)$ but now the inequality 
  $$h_{k}(y)- \varepsilon \leq \sup_{(u,\xi)\in C^-(y)}F_{k}(u,y,\xi) \leq h_{k}(y)+ \varepsilon$$
  only holds outside a set $U_{2\delta}$, where $U_{\delta}$ is a neighborhood of a fine grid $ \varepsilon {\mathbb Z}^n$ in $ {\mathbb R} ^n$, while  we have the general bound

  $$ \left \vert \sup_{(u,\xi)\in C^-(y)}F_{k}(u,y,\xi) -h_{k}(y)\right \vert \leq M \chi_{\delta}(y) + \varepsilon $$ where $\chi_{\delta}$ is $1$ in $U_{\delta}$ and vanishes outside $U_{2\delta}$.  
  
We consider $\ell$ different such continuous families, corresponding to function $\chi_j^\delta$, and remember that their supports $U_j^\delta$ satisfy (see Figure 1 in \cite{STH})
  
\begin{enumerate} [label=\theenumi] \label{grid}
\item the connected components of  ${\mathbb R}^n\setminus U_{j}^{\delta}$, the complement of  $U_{j}^{\delta}$, have diameter less than $\delta$
\item any $(n+1)$ distinct $U_{j}^{\delta}$ have empty intersection. 
\end{enumerate} 
Of course $\delta$ depends on $\ell$, and in the sequel we always assume $\delta$ is chosen appropriately.

  We can then use $F_{k}$ to write a generating function for $\Phi_{\ell k}(L_{1})-L_{2}$ (see \cite{STH}): 
  
\begin{gather*} 
G_{\ell,k} (x_{1};v, \overline x , \overline y, \overline\xi , \overline \zeta, \eta) =\\
{ S}_{1}(u, \eta_{1}) +
\frac{1}{\ell} \sum_{j=1}^{\ell} {F}_{k} (\ell x_{j} , y_{j} ,
\xi_{j}, \zeta_j ) + Q_{\ell} (\overline x ,\overline y)
+ \langle y_{\ell}-v , u - x_{1} \rangle -S_{2}(x_{1},\eta_{2})
 \end{gather*} 
where $\eta= (\eta_1, \eta_2)$, $\overline \xi = (\xi_1,..., \xi_\ell), \overline \zeta = (\zeta_1,..., \zeta_\ell)$ and 
$$Q_{\ell} (\overline x ,\overline y)=
B_{\ell} (x_{1}, y_{k}; y_{1} , x_{2} ,\cdots, x_{\ell-1} , y_{\ell-1} ,
x_{\ell}) =
\sum_{j=1}^{\ell-1} \langle y_{j}, x_{j+1} - x_{j} \rangle +
\langle y_{\ell} , x_{1}-x_{\ell}\rangle
$$

We then consider 

\begin{gather*} \overline G_{\ell,k} (x_{1}, u; \overline x , \overline y , \eta) =\\
{ S}_{1}(u, \eta_{1})+
\frac{1}{\ell} \sum_{j=1}^{\ell} ({h}_{k} ( y_{j}) + M \chi_{j}^\delta (y_{j}) )+ Q_{\ell} (\overline x ,\overline y)
+ \langle y_{\ell}-v , u - x_{1} \rangle -S_{2}(x_{1},\eta_{2})
\end{gather*} 

From now on we shall assume $ \varepsilon\ll b-a$.
Let $\Gamma$ be a cycle in a nonzero homology class in   $H^*(\overline G_{\ell,k}^b, \overline G_{\ell,k}^{a})$, 
and consider the cycle \begin{gather*} (\Gamma\times_{Y}  C^-[\ell])= \left\{ (u ;\overline x, \overline y ,\overline\xi, \overline \zeta , \eta_{1},\eta_{2})
\mid (u , \overline x , \overline y, \eta) \in \Gamma , (\ell x_{j}
,\xi_{j}, \zeta_j) \in \tilde C_{j}^- (y_{j})\right\}\ .
\end{gather*}

 It is contained in  $ G_{\ell,k}^{b+ \varepsilon }$, and its boundary is in $ G_{\ell,k}^{a+ \varepsilon }$. It thus represents a class in $H_*(G_{\ell, k}^{b+ \varepsilon }, G_{\ell, k}^{a+ \varepsilon })$.

We still have to identify the limit as $k,\ell$ go to infinity of  $H_*(\overline G_{\ell,k}^{b}, \overline G_{\ell,k}^{a})$ with $H_{*}(\overline G^b, \overline G^{a})$.

Let 
$$K_{\ell,k}^{ }= \frac{1}{\ell} \left( \sum_{j=1}^{\ell} {h}_{k}(y) + a_{k}
  \chi^{\delta}_{j} (y)\right) \ .
$$ be a Hamiltonian and  $\overline\Psi_{k,\ell}$ be its flow. Clearly $\overline G_{\ell,k}$ is a generating function for $\overline\Psi_{k,\ell}(L_{1})-L_{2}$. 

Now at most $(n+1)$ of the supports of $\chi_{j}^{\delta}$ intersect, so that $$\vert K_{\ell,k}^{}(y)-h_{k}(y) \vert  \leq \frac{A}{ \ell}$$
and this difference goes to zero as $\ell$ goes to infinity and since  $h_{k}(y)$ converges to $\overline H (y)$ (see \cite{STH}, Lemma 4.11 and  Proposition 4.12), we have for $k,\ell$ large enough 
$$\vert K_{\ell,k}^{}(y)-\overline H(y) \vert \leq \varepsilon _{k,\ell} $$
where for each $k$, the sequence $\varepsilon_{k,l}$ converges to $0$. 

This classically entails the existence of maps (\cite{Viterbo-FCFH1}, proposition 1.1 and Remark 1.2) which on the lower line are induced by inclusions of the sublevel sets. 
 $$
 \xymatrix{ 
 FH_{*}(\overline \varphi (L_{1}),L_{2}; a,b)\ar[rr]^{i_k^{a,b}}\ar[d]^{\simeq}&&   FH_{*}(\overline\Psi_{\ell,k}(L_{1}), L_{2}; a+ \varepsilon ,b+ \varepsilon  ) \ar[d]^{\simeq}\\
 H_{*}(\overline G^b, \overline G^a)\ar[rr]&&H_*(\overline G_{\ell,k}^{b+ \varepsilon }, \overline G_{\ell,k}^{a+ \varepsilon })
 } $$
 and 
 $$
 \xymatrix{FH_{*}(\overline \varphi (L_{1}),L_{2}; a,b) \ar[rr]^{j_k^{a,b}}\ar[d]^{\simeq}&&   FH_{*}(\overline\Psi_{\ell,k}(L_{1}), L_{2}; a+ \varepsilon ,b+ \varepsilon  )\ar[d]^{\simeq}\\H_{*}(\overline G^b, \overline G^a) \ar[rr]&& H_*(\overline G_{\ell,k}^{b+ \varepsilon }, \overline G_{\ell,k}^{a+ \varepsilon }) 
 }$$
 such that the composition are the canonical maps 
 
$$FH_{*}(\overline \varphi (L_{1}),L_{2}, a,b) \longrightarrow  FH_{*}(\overline \varphi (L_{1}),L_{2}, a+2 \varepsilon_k,b+2 \varepsilon_k)$$
$$FH_{*}( \Phi_{k\ell} (L_{1}),L_{2}; a ,b ) \longrightarrow FH_{*}( \Phi_{k\ell} (L_{1}),L_{2}; a+ 2 \varepsilon_k ,b+ 2 \varepsilon_k )$$ This concludes the constructions.

\end{proof} 

\begin{remark} The family of Floer cohomology vector spaces $t \mapsto  FH^*(L; t)$ defines a persistence module in the sense of \cite{E-L-Z} (see also \cite{Barannikov} and \cite{Z-C, LNV, PolShel, L-S-V}). One can then define a distance between persistence modules, the `` interleaving distance'' and Theorem \ref{2.1} states that the $FH^*(\varphi_k^1(L), t)$ converge (for the interleaving distance) to $FH^*(\overline{\varphi}^1(L), t)$
In terms of barcodes,  this means that the barcodes of $\varphi_k^1(L)$ converge to the barcode of $\overline\varphi^1$. 
\end{remark}

\section{Floer cohomology for \texorpdfstring{$C^0$}{C0}  integrable Hamiltonians}\label{Integrable-Hamiltonians}

 Let $H(p)$ be a smooth integrable Hamiltonian. Then the corresponding flow is 
$ (q,p) \mapsto (q+t\nabla H(p), p)$. If we consider its graph $\{(q,p,Q,P) \mid (Q,P)=\varphi^1(q,p)\}$ and its image by 
$(q,p,Q,P) \mapsto ( q,P, P-p, q-Q)$ is 
$\Gamma_H=\{(q,p, 0, \nabla H(p))\}$ and has $S(x,y)=H(y)$ as generating function (with no fibre variable). 
In the rest of the section we assume $H$ is a compact supported $C^0$ Hamiltonian, $\varphi^t$ its flow in $\widehat{\DHam}_c(T^*T^n)$. We can think of $H$ as the limit of smooth Hamiltonians $H_k$ depending only on $p$ and having a common compact support. Then $\varphi^t$ is the limit of the $\varphi_k^t$ and $\Gamma_H$ the limit of the $\Gamma_{H_k}$ in the completion of the space of Lagrangians in $T^*(T^n\times {\mathbb S}^n)$. Note that since the generating function for 
$\Gamma_{H_k}$ is $S_k(q,P)=H_k(P)$, we have that $S_k$ $C^0$-converges to $S(q,P)=H(P)$.

In the following proposition, we refer to Appendix \ref{Appendix-crit}, Definition \ref{Def-9.14} for the definition of  $d_s$. Note that according to Proposition \ref{near-strong}, if $\alpha \in d_sH(p)$ then $\alpha \in d_sH_k(p_k)$ for some sequence $(p_k)_{k\geq 1}$. 

We denote by $H_\alpha(p)=H(p)-\langle \alpha, p \rangle$.

\begin{proposition} 
Let $\alpha \in d_sH(p)$ and $c=H(p)$, we have for $ \varepsilon >0 $ small enough  $$FH^*(\Gamma_{H_\alpha};c+ \varepsilon , c- \varepsilon )\overset{def}{=}FH^*(\Gamma_{H_\alpha}, 0_{T^n\times {\mathbb R}^n};c+ \varepsilon , c- \varepsilon ) \neq 0$$
\end{proposition} 
\begin{proof} 
Indeed, since $S(x,y)=H(y)$ is a generating function for $\Gamma_H$, $S_\alpha (x,y)=H(y)-\langle y,\alpha\rangle$ is a generating function for $\Gamma_{H_\alpha}$. So we have 
$FH^*(\Gamma_{H_\alpha}, 0_{T^n\times {\mathbb R}^n};c+ \varepsilon , c- \varepsilon ) = H^*(H_\alpha^{c+ \varepsilon }, H_\alpha^{c- \varepsilon })$. By Definition \ref{Def-9.14} this is non-zero if $\alpha \in d_sH(p)$. 
\end{proof} 

Let  $\alpha \in {\mathbb R}^n$  and set   $\Lambda_{\alpha}=\{ (x,y,X,Y) \mid X=0, Y=\alpha\}$. Now let $f_{\alpha,C}(x,y)=\langle \alpha , y\rangle\chi ( \frac{y}{C})$ where $\chi(y)=1$ for $ \vert y \vert \leq 1$, and vanishes for $ \vert y \vert \geq 2$. Then  $ \frac{\partial f_{\alpha,C}}{\partial x}(x,y)=0$ and $\frac{\partial f_{\alpha,C}}{\partial y}(x,y))=\alpha$ for $\vert y \vert \leq C$, so  $\tilde H_\alpha(y) = H(y)-f_{\alpha,C}(x,y)$ coincides with $H_\alpha$ in $\{(x,y,\xi,\eta) \mid \vert y \vert \leq C\}$ and
   $$\widetilde\Lambda_{\alpha}= \{(x,y, \frac{\partial f_{u,C}}{\partial x}(x,y), \frac{\partial f_{\alpha,C}}{\partial y}(x,y))\mid (x,y)\in T^n\times {\mathbb S} ^n\}$$ coincides with $\Lambda_\alpha$ in $\{(x,y,\xi,\eta) \mid \vert y \vert \leq C\}$, so $\Lambda_\alpha \cap \Gamma_H = \widetilde\Lambda_\alpha \cap \Gamma_H$, provided the Lipschitz constant of $H$ is less than $C$ and both $\Gamma_H$ and $\widetilde\Lambda_\alpha$ are compact supported.  
 Provided $C$ is chosen to be large enough, the following is a consequence of the above remarks :

\begin{Cor}\label{Cor-8.2} If $\alpha \in {\mathbb R}^n$ is such that $\alpha \in d_sH(p)$, and $C$ is large enough,  then for $c= H(p)$, 
\begin{gather*} FH^*(\Gamma_H, \Lambda_\alpha, c+ \varepsilon , c-\varepsilon ) =FH^*(\Gamma_H, \widetilde\Lambda_\alpha, c+ \varepsilon , c-\varepsilon)= \\ 
FH^*(\Gamma_{H_\alpha}, 0_{T^n\times {\mathbb R}^n};c+ \varepsilon , c- \varepsilon )=FH^*(\Gamma_{H_\alpha}; c+ \varepsilon , c- \varepsilon ) \neq 0
\end{gather*} 
\end{Cor} 
\begin{proof} 
Indeed, for the first equality, we may deform $\Lambda_\alpha$ to $\widetilde \Lambda_\alpha$ in such a way that we do not introduce any new intersection point  : the deformation is achieved by moving $C$ to $+\infty$ starting from $C$ larger than the Lipschitz constant of $H$. For the second one, since $\widetilde \Lambda_\alpha$ is the graph of the differential of $f_{\alpha,C}$, we have 
\begin{gather*} FH^*(\Gamma_H, \widetilde\Lambda_\alpha; c+ \varepsilon , c-\varepsilon)=FH^*(\Gamma_H, gr(df_{\alpha,C}); c+ \varepsilon , c-\varepsilon)=\\ FH^*(\Gamma_H- gr(df_{\alpha,C}),0_{T^n\times {\mathbb R}^n}; c+ \varepsilon , c-\varepsilon)= FH^*(\Gamma_{H-f_{\alpha,C}},0_{T^n\times {\mathbb R}^n}; c+ \varepsilon , c-\varepsilon)
\end{gather*} 
Deforming $C$ to $+\infty$ we see that the last term is equal to $$FH^*(\Gamma_{H_{\alpha}},0_{T^n\times {\mathbb R}^n} ; c+ \varepsilon , c-\varepsilon)$$
\end{proof}

 \section{A proof of  the weak  Conley conjecture on \texorpdfstring{$T^*T^n$}{T*Tn}}\label{section-6}
Let us consider now the case of periodic orbits and prove Theorem \ref{Thm-weak-Conley}. We shall start by proving  the first part of Theorem \ref{Thm-weak-Conley}
\begin{proposition} \label{Prop-9.1}
Let $\alpha=\frac{u}{v}$ with $u\in \ZZ^n, v\in {\mathbb N}^*$ be a  rational vector written in irreducible form\footnote{i.e. $ \vert v \vert $ is minimal} such that $\alpha \in d_s\overline H (p)$. 
Then there exists a periodic orbit for $\varphi$ with rotation vector $\alpha$, that is $\Phi^{kv}(q,p)=(q+ku,p)$ and average action $\langle p, \alpha \rangle - \overline H(p)$. 
\end{proposition} 
\begin{proof}
 Let $\alpha$ be a rational vector. We write $\alpha= \frac{u}{v}$ with $u\in \ZZ^n, v\in {\mathbb N}^*$ mutually prime.   We need to find fixed points of $\Phi^{kv}-ku$ that will yield periodic orbits of $\Phi$ of period $kv$ and rotation vector $ \frac{u}{v}$. This is equivalent to finding fixed points of $\rho_{k}^{-1}\Phi^{kv}\rho_{k}-u= \Phi_k^v-u$. 
  
   If $\Gamma_{k}^v$ is the graph of $\Phi_{k}^v$ that is $$\Gamma_{k}^v=\{
  ( q, P_{k}(q,p), P_{k}(q,p)-p, q-Q_{k}(q,p))\mid (Q_{k},P_{k})=\Phi_{k}^v(q,p)\}$$
  
  and we look for points in $\Gamma_{k}^v \cap \Lambda_{u}$ where $\Lambda_{u}=\{ (x,y,X,Y) \mid X=0, Y=u\}$, as before, $f_{u,C}(x,y)=\langle u , y\rangle\chi ( \frac{y}{C})$ where $\chi=1$ for $ \vert y \vert \leq 1$, and vanishes for $ \vert y \vert \geq 2$, and   $$\widetilde\Lambda_{u}= \{(x,y, \frac{\partial f_{u,C}}{\partial x}(x,y), \frac{\partial f_{u,C}}{\partial y}(x,y))\mid (x,y)\in T^n\times {\mathbb S} ^n\}$$
  
 We claim that for $C$ large enough,  $\Gamma_{k}^v\cap \widetilde \Lambda_{u} \subset (\Gamma_{k}^v\cap \Lambda_{u}) \cup 0_{T^n\times {\mathbb S}^n}$. Indeed, $\Lambda_{u}$ and $\widetilde \Lambda_{u}$ coincide in  $\{ (x,y,X,Y) \mid \vert y \vert \leq C \}$, but outside this set, $\Gamma_{k}^v$ coincides with the zero section. There are actually two types of points in $(\widetilde \Lambda_{u}- \Lambda_{u})\cap \Gamma_{k}^v$ the ones with action $0$, the other with action $A_{u,C}=f_{u,C}(y_{u,C})$ where $f'_{u,C}(y_{u,C})=0$ and $y_{u,C}$ is a non-trivial critical point of $f_{u,C}$. Note that setting $F=f_{u,1}$ we have 
  $f_{u,C}(y)= kCF( \frac{y}{C})$. So,  $f'_{u,C}(y)=kF'( \frac{y}{C})$ and $y_{k,C}=Cz$ where $z$ is a non-trivial critical point of $F$, and $f_{u,C}(y_{u,C})=kCF(z)$. Thus if $f_{u,C}(y_{k,C})\neq 0$ we have that for $C$ large enough, the critical value is outside any given interval. 
 
 Now since $\Gamma_{k}^v \overset{h} \longrightarrow \overline \Gamma^v$, where $\overline \Gamma^v$ is the graph of $\overline\Phi^v$ in the $\gamma$-completion $\widehat{\mathcal L}$, and provided we have $FH^*(\overline \Gamma^v, \widetilde\Lambda_{u}, c- \varepsilon , c+ \varepsilon ) \neq 0$, 
 according to Corollary \ref{Cor-8.2}, this  implies for $k$ large enough
  $FH^*(\Gamma_{k}^v, \widetilde\Lambda_{u}, c- \varepsilon , c+ \varepsilon ) \neq 0$ and as we saw that $FH^*(\Gamma_{k}^v, \widetilde\Lambda_{u}, c- \varepsilon , c+ \varepsilon )=FH^*(\Gamma_{k}^v, \Lambda_{u}, c- \varepsilon , c+ \varepsilon )$ we have a fixed point with action in $[c- \varepsilon, c+ \varepsilon ]$ (hence average action in $[\frac{c- \varepsilon}{v}  ,  \frac{c+ \varepsilon}{v}]$). 
 Now \begin{gather*} FH^*(\overline \Gamma^v, \widetilde\Lambda_{u}, a,b)=H^*(f_{u,C}(x,y)-v\cdot \overline H(y);a,b)=\\ =H^*(v\cdot \overline H_{u/v};a,b)=H^*(\overline H_{u/v}, \frac{a}{v}, \frac{b}{v})\end{gather*}  where $\overline{H}_u(y)=\langle u,y\rangle -\overline {H}(y)$, hence $H^*(\overline H_{u/v}; c/v-\varepsilon, c/v+ \varepsilon)\neq 0$ is equivalent to the existence of $p$ such that $d_s \overline H(p)=u/v$ and $\overline H_{u/v}(y)=c/v$ according to Appendix \ref{Appendix-crit}, Proposition \ref{Prop.10-5}.
 \end{proof} 
 \begin{proof}[Proof of Theorem \ref{Thm-weak-Conley}] 
Statement (a) is just Proposition \ref{Prop-9.1}. For statement (b), let us assume that $\overline H \not\equiv 0$. Then the set $\{ d_s \overline H (p) \mid p \in {\mathbb R}^n \}$ has non-empty interior according to Lemma \ref{fundlemma} of section \ref{Appendix-crit}. There are thus infinitely many rational, non-collinear values of $\alpha$ such that we have a periodic orbit of rotation vector $\alpha$.

Let us now consider the case where $\overline H\equiv0$.  This means in particular that  $\lim_{k}\frac{1}{k} c_{\pm}(\varphi^k)=0$ (we refer to \cite{Viterbo-STAGGF} for the definition of the capacities $c_\pm$). 
But since $c_{+}(\varphi)=c_{-}(\varphi)=0$ if and only if $\varphi=\Id$,  there is an infinite sequence of $k$ such that either $c_{+}(\varphi^k)>0$ or $c_{-}(\varphi^k)<0$. 
Replacing $\varphi$ by $\varphi^{-1}$ we may always assume we are in the first case. Then the fixed point $x_{k}$ of $\varphi^k$ corresponding to $c_{+}(\varphi^k)$ is such that its action, $A(x_{k}, \varphi^k)=c_{+}(\varphi^k)$. 
In case $x_{k}$ is the fixed point of $\varphi$, we get that $A(x_{k},\varphi^k)=k\cdot A( x_{k}, \varphi)$. More generally if $x=x_{pj}=x_{pk}$ we get $\frac{1}{pj}A(x, \varphi^{pj})=\frac{1}{pk}A(x, \varphi^{pk})$, so $\frac{1}{pj}c_{+}(\varphi^{pj})=\frac{1}{pk}c_{+}( \varphi^{pk})$, but since the sequence $\frac{1}{k} c_{+}(\varphi^k)$ is positive and converges to zero, it is non constant and takes infinitely many values. Thus, there are infinitely many fixed points.

More precisely, let us assume $\frac{1}{k} c_+(\varphi^k)=\frac{1}{l} c_+(\varphi^l)$, and $x_k, x_l$ are the corresponding fixed points of $\varphi^k, \varphi^l$. Then if $x_k=x_l$ we must have $\varphi^d(x_k)=x_k$ where $d$ is the $lcd(k,l)$. In particular if $A_1$ is the set of non-zero actions for the fixed points of $\varphi$, and we assume this set is finite, $gcd(k,l)=1$ implies $\frac{1}{k} c_+(\varphi^k)=\frac{1}{l} c_+(\varphi^l)\in A_1$. But the finiteness of $A_1$ and the fact that $\frac{1}{k} c_+(\varphi^k)$ goes to zero with $k$ implies that this is impossible for $k$ large enough. Thus taking the sequence $x_{p_j}$ 
where $p_j$ is the $j$-th prime number. The prime number theorem of Hadamard and La Vall\'ee Poussin then implies that there are asymptotically at least $\frac{N}{\log (N)}$
 distinct periodic orbits of period less than $N$.  This proves the second estimate. The first follows from the formula $\sum_{p\leq N}p \simeq \frac{N^2}{2 \log(N)}$.

\end{proof}

\section{Proof of Lemma \ref{Lemma-3.4} and  the ergodicity of the invariant measures}
We consider an invariant measure of the form $\chi(q,p)\omega^n$.
The  rotation vector is defined, at least when either $X_H$ or $\meas$ are compact supported, and $\meas$ is an invariant measure for $X_H$,  as
 \begin{align*} r(\meas):&  H^1(M, {\mathbb R} ) \longrightarrow {\mathbb R} \\ & \tau    \longrightarrow \int_{S^1\times M}  i_{X_H}\tau   d\meas   \end{align*} 
 
 \begin{proof}[Proof of Lemma \ref{Lemma-3.4}]
 We prove that the rotation number and mean action of $\chi(q,p)\omega^n$ are $0$ and $-(n+1)\frac{\mathrm {Cal}(\varphi)}{\mathrm{vol}(\mathrm{supp}(\varphi))}$ respectively. Since $H$ vanishes unless $\chi=1$, we have $r(\chi\omega^n)=r(\omega^n)$ and is given by: 
 \begin{gather*} \int_{S^1\times M} (i_{X_H}\tau) \omega^n \wedge dt= \int_{S^1\times M}  i_{X_H}\tau \wedge d\lambda \wedge  \omega^{n-1} \wedge dt=-\int_{S^1\times M} d(i_{X_H}\tau)\wedge \lambda \wedge \omega^{n-1}\wedge dt=\\ \int_{S^1\times M} (i_{X_H}d\tau - L_{X_H}\tau) \wedge \lambda \wedge \omega^{n-1}\wedge dt = - \int_{S^1\times M}  L_{X_H}\tau \wedge \lambda \wedge \omega^{n-1} \wedge dt=\\ \int_{S^1\times M}  \tau \wedge (L_{X_H}\lambda )\wedge \omega^{n-1}\wedge dt =  \int_{S^1\times M}  \tau \wedge d(i_{X_H}\lambda + H)\wedge \omega^{n-1}\wedge dt \end{gather*} 
 But this last is the integral of the differential of a (compact supported) exact form, $(i_{X_H}\lambda+H)\tau \wedge \omega^{n-1} \wedge dt$, hence it vanishes. 
 
Moreover the average action of this measure is given by $$\int_{S^1\times M} \left (i_{X_H}\lambda -H\right ) \omega^n\wedge dt$$ since $H$ vanishes outside the support of $\varphi$.  
To compute this integral, we first notice that since $\lambda\wedge \omega^n=0$, we have
\begin{gather*} 0=i_{X_H}(\lambda\wedge \omega^n)=(i_{X_H}\lambda ) \omega^n -n\lambda \wedge i_{X_H}\omega \wedge \omega^{n-1}= (i_{X_H}\lambda ) \omega^n +n\lambda \wedge dH \wedge \omega^{n-1}
\end{gather*} 

so by Stokes's formula $$\int_{S^1\times M} (i_{X_H}\lambda ) \omega^n=-n \int_{S^1\times M}\lambda \wedge dH \wedge \omega^{n-1}=-n \int_{S^1\times M} H \wedge \omega^{n}$$
\begin{gather*} 
\int_{S^1\times M} \left (i_{X_H}\lambda -H\right ) \omega^n\wedge dt =-(n+1)\int_{S^1\times M} H\wedge \omega^n\wedge dt =-(n+1) \Cal(\varphi)
\end{gather*} 

where $\Cal (\varphi)$ is the Calabi invariant of $\varphi$. 
\end{proof} 

Now let us consider the subsets in ${\mathbb R}^{n+1}$ given by  $$\overline R(H)=  \{(\alpha,\langle p,\alpha\rangle -\overline H(p)) \mid \alpha \in \partial_C \overline H(p)\} $$
and 
$$R(H)=  \{(\alpha,A) \mid \exists \meas , \rho(\meas)=\alpha, \varphi_H^*(\meas)=\meas,  A_H(\meas)=A \} $$

We proved in the previous sections that $\overline R (H) \subset R (H)$. Moreover for each element $(\alpha, A)$ in $\overline R (H)$ there is a well defined measure $\meas_{\alpha , A}$ such that $ \rho(\meas_{\alpha , A})=\alpha, \varphi_H^*(\meas_{\alpha , A})=\meas_{\alpha , A},  A_H(\meas_{\alpha , A})=A$. 

We want to figure out whether the measures thus found are ergodic, or whether we can find the minimal number of ergodic measures. Indeed, we assume there are ergodic measures $\meas_1, ..., \meas_q$ generating all the measures we obtained. For this, we need the measures we found to be contained in a polytope with $q$ vertices. Since we know that if $\overline H\not\equiv 0$ the projection of  $\overline R (H)$ on $ {\mathbb R}^{n}$ contains an open set (Lemma \ref{fundlemma} of section \ref{Appendix-crit}), hence the same holds for $R(H)$ and we must have $q\geq n+1$. If moreover  $R (H)$ contains an open set in ${\mathbb R}^{n+1}$, necessarily we shall have $q\geq n+2$.  
We thus proved

\begin{proposition} 
If $\overline H \not\equiv 0$ then there are at least $n+1$ distinct ergodic measures for $\varphi^1$. 
\end{proposition} 
In fact by our argument,  there is at least one ergodic measure for each extreme point of $\bar R(H)$. 

\section{The case of non-compact supported Hamiltonians}

A priori our results only deal with compact supported Hamiltonians. However the same truncation tricks as in \cite{STH} allows one to extend homogenization to coercive Hamiltonians (see section 12 of \cite{STH}) and to prove the following statements whose proofs are  left to  the reader 

\begin{theorem} \label{main-theorem-coercive}
Let $H(q,p)$ be a coercive Hamiltonian in $T^{*}T^{n}$, that is 
$$\lim_{ \vert p \vert \to +\infty} H(q,p)=+\infty$$
and denote by $\overline H (p)$ its homogenization defined in \cite{STH}. 
Let   $\alpha \in \partial_C \overline H (p)$
Then there exists, for $k$ large enough, a solution  of $\phi^{k}(q_{k},p_{k})=(q_{k}+ k \alpha_{k} ,p'_{k})$ 
(with $\lim_{k}\alpha_{k}= \alpha$) and average action 
$$A_k = \frac{1}{k} \int_{0}^k [\gamma_{k}^*\lambda -H(t,\gamma_{k}(t)) ]dt$$ where 
$\gamma_{k} (t)= \varphi^{t}(q_{k},p_{k})$. Moreover as $k$ goes to infinity $A_{k}$ converges
  to $$\lim_{k}A_{k}=p\cdot \alpha-\overline H(p)$$ Therefore there exists an invariant 
  measure $\meas_{\alpha}$ with rotation vector $\alpha$ and average action  
  $${\mathcal A}(\meas_{\alpha})\overset{def}=\int_{T^*T^n} [p\frac{\partial H}{\partial p}(q,p)-H(q,p) ]d\meas_{\alpha}= p\cdot \alpha-\overline H(p).$$
\end{theorem}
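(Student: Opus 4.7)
The plan is to deduce Theorem \ref{main-theorem-coercive} from Theorem \ref{main-theorem} by the same truncation procedure that \cite{STH} uses to extend the definition of $\overline H$ to the coercive case. For each large $R$, pick a smooth cutoff $\chi_R\colon{\mathbb R}^n\to[0,1]$ equal to $1$ on $\{\vert p\vert\le R\}$ and to $0$ on $\{\vert p\vert\ge 2R\}$, and set $H^R(t,q,p)=\chi_R(p)H(t,q,p)$. This $H^R$ is compact supported, so its homogenization $\overline{H^R}$ is provided by the theory developed in the main body of this paper; the results of \cite{STH} further show that $\overline{H^R}(p)=\overline H(p)$ on $\{\vert p\vert\le R-C_R\}$, where $C_R$ depends only on the local Lipschitz behaviour of $H$. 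In particular, for any fixed $p$ and $R$ large enough, $\partial_C\overline{H^R}(p)=\partial_C\overline H(p)$, so every $\alpha\in\partial_C\overline H(p)$ lies in $\partial_C\overline{H^R}(p)$.

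Next I would apply Theorem \ref{main-theorem} to $H^R$: for each large $k$ this produces points $(q_k,p_k)$ with $\varphi_{H^R}^{k}(q_k,p_k)=(q_k+k\alpha_k,p_k')$, $\alpha_k\to\alpha$, $p_k\to p$, and trajectories $\gamma_k^R$ whose average actions converge to $\langle p,\alpha\rangle-\overline H(p)$. These are a priori trajectories of $\varphi_{H^R}$, not of $\varphi_H$. The heart of the argument, and the main obstacle, is to promote them to trajectories of the original flow by showing that for $R$ large the whole orbit $\{\gamma_k^R(t):t\in[0,k]\}$ stays inside $\{\vert p\vert\le R\}$, the region where $H^R\equiv H$.

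The confinement would follow from coercivity together with the variational source of the orbit. The endpoint fibre coordinate is pinned near $p$, and the time average of $H^R$ along $\gamma_k^R$ is controlled via the action identity $A_k=\frac{1}{k}\int_0^k(p\dot q-H^R)\,dt$, together with the fact that $\varphi_{H^R}^k$ returns the orbit to the same fibre $\{p=p_k\}$, which bounds the contribution of $\int_0^k p\dot q\,dt$. Coercivity of $H$ then forces the $p$-component of $\gamma_k^R$ to remain in a ball whose radius depends only on $p$, $\alpha$ and $\overline H(p)$, and crucially not on $R$. Taking $R$ larger than this a priori radius makes $\gamma_k^R$ a bona fide orbit of $\varphi_H$. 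This is the only step that genuinely uses coercivity, and it is where I expect to spend the most effort making rigorous.

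Granted the confinement, the rest is a verbatim repetition of the proof of Theorem \ref{main-theorem}. A weak-$*$ limit of the normalized empirical measures $\frac{1}{k}[\gamma_k^R]$ produces an invariant probability measure $\mu_\alpha$ for $\varphi_H$; the rotation number of $\mu_\alpha$ is obtained by passing to the limit in the displacement identity $\Phi^k(q_k,p_k)=(q_k+k\alpha_k,p_k')$ and using $\alpha_k\to\alpha$, while the action value ${\mathcal A}(\mu_\alpha)=\langle p,\alpha\rangle-\overline H(p)$ comes from the limit of $A_k$, exactly as at the end of the proof of Theorem \ref{main-theorem}. Because the support of $\mu_\alpha$ lies inside the $p$-ball where the truncation is trivial, all the computed quantities are indeed for $H$ itself, not for $H^R$.
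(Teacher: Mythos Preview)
The paper does not actually prove this theorem: it merely states that ``the same truncation tricks as in \cite{STH}'' reduce the coercive case to the compact supported one and leaves the details to the reader. Your overall plan---truncate to $H^R$, apply Theorem~\ref{main-theorem}, then argue that the orbits you obtain never feel the truncation---is precisely the intended route, and the identification of the confinement step as the only nontrivial point is correct.

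The gap is in your confinement argument itself. Bounding the time average of $H^R$ along $\gamma_k^R$ via the action identity buys you nothing: $H^R$ is compactly supported, hence uniformly bounded independently of the orbit, so controlling $\frac{1}{k}\int_0^k H^R\,dt$ carries no geometric information about where the orbit goes. Your claim that ``returning to the same fibre $\{p=p_k\}$'' bounds $\frac{1}{k}\int_0^k p\dot q\,dt$ is also unjustified: integration by parts gives $\langle p_k,\alpha_k\rangle - \frac{1}{k}\int_0^k \dot p\cdot q\,dt$, and the second term (with $q$ lifted to $\mathbb R^n$) is not controlled by the boundary data. Finally, even a genuine bound on the \emph{average} of $H$ along the orbit would not prevent brief excursions to large $\vert p\vert$, which is what you need to rule out.

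The argument that actually works, at least in the autonomous case, is direct: $H^R$ is conserved along $\gamma_k^R$, and at the initial point $H^R(q_k,p_k)=H(q_k,p_k)$ is bounded by $\sup_{q\in T^n}H(q,p)+\varepsilon$ since $p_k\to p$. If the orbit reached $\vert p\vert=R_0$ with $R_0<R$, then at that point $H^R=H\ge \inf_{\vert p\vert=R_0}H$, which by coercivity exceeds the conserved value once $R_0$ is large enough (depending only on $p$, not on $R$). This gives the $R$-independent confinement you want. In the genuinely time-dependent case this argument fails, and one needs either an additional hypothesis (e.g.\ $H$ close to an autonomous coercive Hamiltonian, or a two-sided coercivity condition that controls $\dot p$) or a different mechanism; the paper is silent on this point and you should be explicit about what you are assuming.
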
 

In particular for $H(q,p)$ strictly convex in $p$ and superlinear, we have that $\overline H(p)$ is also convex in $p$ (see proposition 12.4 in \cite{STH}) and superlinear,  so for each $\alpha$ there exists  a unique convex set $C_\alpha$ in $ ({\mathbb R}^n)^*$ such that 
$p \in C_\alpha$ if and only if $\alpha \in \partial_C\overline H(p)$. Note that where $\overline H$ is strictly convex, $C_\alpha$ is reduced to a point, and that in this case Mather's theory is much more precise, and tells us that 
the measure obtained are minimal, and its support is the graph of a Lipschitz function over a subset of $T^n$.  

The above theorem extends as in section 11 of \cite{STH} to the coercive time-dependent (i.e. non-autonomous) case, that is $H(t,q,p)$ with $H(t,q,p)$ $T$-periodic in $t$. We then again have a homogenized Hamiltonian $\overline H$. 
The continuity of H is not known in general, though H is always lower semi-continuous. Continuity does hold in certain special cases: e.g., for H convex or globally Lipschitz at infinity.

\section{Appendix 1: Critical point theory for non-smooth functions and subdifferentials}\label{Appendix-crit}

The aim of this section is to clarify the notions of differential that occur crucially in the previous sections. Indeed, restricting the set of rotation vectors of invariant measures to the values corresponding to regular points is not an option, since even in the convex case, the function $\overline H$ has generally dense subsets of non-differentiable values. We shall deal with two situations. The first one corresponds to Lipschitz functions: these occur as homogenization of $C^1$ (or Lipschitz) 
Hamiltonians, which are the only ones we encounter in practice. This is the subject of the first subsection, and uses analytic tools, basically a notion of subdifferential and a suitable version of the Morse deformation lemma. The second one applies to any continuous function. It is best suited to our general line of work, and in principle allows us to use the main theorem in the case of Hamiltonians belonging to the Humili\`ere completion, even though one should formalize the notion of invariant measure for such objects. 

\subsection{Analytical theory in the Lipschitz case}\label{sec:10}
While the critical point theory has been studied for (smooth and non-smooth) functionals on infinite dimensional spaces, we shall here restrict ourselves to the finite dimensional case. 
Remember that for a sequence of sets, $(X_k)_{k\in \mathbb N}$, we set
$$\limsup_k X_k = \bigcap_p \bigcup_{k\geq p} X_k$$
$$\liminf_k X_k = \bigcup_p \bigcap_{k\geq p} X_k$$
and for a function $f$ we defined
$$f^c=\left\{x \in M \mid f(x)<c \right \}$$
First assume $f$ is Lipschitz on a smooth manifold $M$. Then we define 

\begin{definition}\label{Def-12.1} Let $f$ be a Lipschitz function. 
The vector $w\in T_x^*M$ is in $\partial_C f(x)$ the Clarke differential of $f$ at $x$, if and only if $$ \forall v \in E\; \; \limsup_{h \to 0, \lambda\downarrow 0} \frac{1}{\lambda} [f(x+h+\lambda v)-f(x+h) ] \geq  \langle w, v \rangle $$
\end{definition}

The following proposition describes the main properties of $\partial_C f$

\begin{proposition}[\cite{Clarke2, Chang}]
We have the following properties:

\begin{enumerate} [label=\theenumi]
\item $\partial_C f(x)$ is a non-empty convex compact set in $T_{x}^*M$
\item $\partial_C (f+g)(x)\subset \partial_C f(x)+ \partial_C g (x)$
\item $\partial_C (\alpha f) (x) = \alpha \partial_C f(x)$
\item The set-valued  mapping $ x \mapsto \partial_C f(x)$ is upper semi-continuous. The map $x  \mapsto \lambda_f (x)=\min_{w\in \partial_C f(x)} \vert w \vert$ is lower semi-continuous. 
\item Let $\varphi \in C^1([0,1], X)$ then  $h=f\circ \varphi$ is differentiable almost everywhere (according to Rademacher's theorem) and
$$h'(t) \leq \max \{ \langle w, \varphi'(t)\rangle \mid w \in \partial_C f(\varphi(t))\}$$
\end{enumerate}
\end{proposition}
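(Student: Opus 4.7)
The plan is to verify each item directly from the definition of the Clarke subdifferential, exploiting that the functional $v \mapsto f^{\circ}(x;v) := \limsup_{h\to 0,\,\lambda\to 0^+} \lambda^{-1}[f(x+h+\lambda v)-f(x+h)]$ is sublinear (positively homogeneous and subadditive), bounded by $L\,|v|$ where $L$ is the local Lipschitz constant, and is the support function of $\partial_C f(x)$. Since the proposition is the classical package due to Clarke and Chang, my intention is a compact sketch rather than a full exposition.

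For (a), I would note that $\partial_C f(x)=\{w \mid \langle w,v\rangle\le f^{\circ}(x;v)\text{ for all }v\}$ is a closed convex set as an intersection of closed half-spaces, and is bounded by $\{|w|\le L\}$ since $\langle w,v\rangle \le L\,|v|$ for every $v$; compactness follows. Non-emptiness is where the real input sits: applying Hahn--Banach to the sublinear majorant $f^{\circ}(x;\cdot)$ produces a linear functional dominated by it, which is the required element of $\partial_C f(x)$. This is the main (and only genuinely nontrivial) analytical step.

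For (b), observe $(f+g)^{\circ}(x;v)\le f^{\circ}(x;v)+g^{\circ}(x;v)$ by splitting the $\limsup$; since $f^{\circ}(x;\cdot)+g^{\circ}(x;\cdot)$ is the support function of $\partial_C f(x)+\partial_C g(x)$ and both sets are closed convex, duality gives the inclusion. For (c), one computes $(\alpha f)^{\circ}(x;v)=\alpha f^{\circ}(x;v)$ when $\alpha\ge 0$, and $(-f)^{\circ}(x;v)=f^{\circ}(x;-v)$, from which $\partial_C(\alpha f)(x)=\alpha\,\partial_C f(x)$ is immediate. For (d), if $x_n\to x$, $w_n\in\partial_C f(x_n)$ and $w_n\to w$, then for each $v$
$$
\langle w,v\rangle=\lim_n \langle w_n,v\rangle\le \limsup_n f^{\circ}(x_n;v)\le f^{\circ}(x;v),
$$
the last inequality following by absorbing a sequence $h_n=x_n-x\to 0$ into the outer $\limsup$ of $f^\circ(x;v)$; this gives $w\in\partial_C f(x)$, i.e.\ upper semi-continuity. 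Lower semi-continuity of $\lambda_f$ is then a formal consequence of upper semi-continuity of the compact convex-valued map $x\mapsto \partial_C f(x)$ combined with the continuity of $w\mapsto |w|$.

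For (e), $h=f\circ \varphi$ is Lipschitz on $[0,1]$, hence differentiable almost everywhere by Rademacher's theorem in dimension one. At such a point $t$, writing $h(t+\tau)-h(t)=f(\varphi(t)+\tau\varphi'(t)+o(\tau))-f(\varphi(t))$ and using the $C^1$ expansion of $\varphi$, one dominates the difference quotient by $f^{\circ}(\varphi(t);\varphi'(t))$, which equals $\max\{\langle w,\varphi'(t)\rangle \mid w\in\partial_C f(\varphi(t))\}$ by (a) and the support-function interpretation. This yields the claimed inequality on $h'(t)$, completing the proposition.
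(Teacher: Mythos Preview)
The paper does not prove this proposition; it simply records it with the citation to Clarke and Chang and moves on. Your sketch is correct and is exactly the classical argument found in those references: one shows that $f^{\circ}(x;\cdot)$ is sublinear, bounded by the Lipschitz constant, and is the support function of $\partial_C f(x)$, and then each item drops out by convex duality, Hahn--Banach for non-emptiness, and an absorption-of-increments argument for upper semi-continuity. Nothing to add.
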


According to \cite{Clarke3} we have
\begin{proposition} 
Let $E$ be a set of full measure in a neighborhood of $x_0$, and $f$ a Lipschitz function differentiable in $E$. 
then $$\partial_Cf(x)= {\rm Conv}\left\{ \lim_i df(x_i) \mid x_i \longrightarrow x, x_i\in E\right \}$$
\end{proposition} 
this implies
We shall also use the following consequence of thm. 3.1 in \cite{Jourani}

\begin{proposition}\label{Prop-12.4} (\cite{Jourani})
Let $(f_j)_{j\in \mathbb N}$ be a sequence of Lipschitz functions with bounded Lipschitz constant,  having $C^0$-limit $f$.
Then $$\partial_C f(x_0) \subset \limsup_{\substack{j\to +\infty\\ x\to x_0}} \partial_C f_j(x)$$
\end{proposition} 
\begin{definition}Let $f$ be a Lipschitz function. 
We define the set of critical points at level $c$ as  $K_c=\{ x \in f^{-1}(c) \mid 0 \in \partial_C f (x) \}$.  We set $\lambda_f(x)=\inf_{w\in \partial_C f(x)} \Vert w \Vert \}$
\end{definition}

\begin{definition}\label{Definition-PS2} Let $f$ be a Lipschitz function. 
We shall say that $f$ satisfies the Palais-Smale condition ((PS) condition for short) if
 any sequence $(x_n)_{n\geq 1}$ such that $(f(x_n))_{n\geq 1}$ converges and $ \lim_n \lambda_f (x_n)=0$, $(x_n)_{n\geq 1}$ has a converging subsequence. 
\end{definition}

The crucial fact is the existence of a pseudo-gradient vector field in the complement of $K_c$. We denote by $N_\delta(K_c)$ a $\delta$-neighborhood of $K_c$. 

\begin{lemma}[Lemma 3.3 in \cite{Chang}]\label{Lemma-10.7}
Assume $f$ satisfies the Palais-Smale condition. There exists a Lipschitz vector field $v(x)$ defined in a neighborhood of $B(c, \varepsilon , \delta)=( f^{c+ \varepsilon}\setminus f^{c-\varepsilon})\setminus N_\delta(K_c)$ such that $ \Vert v(x) \Vert  \leq 1$ and
 $\langle v(x), w \rangle \geq \frac{b}{2}$ for all $w \in \partial f (x)$, where $0<b= \inf\{\lambda_f (x) \mid x \in B(c,\varepsilon, \delta)\}$.
\end{lemma}
Note that the Lipschitz pseudo-gradient can be regularized to a smooth one if one wishes, since away from the critical points, being a pseudo-gradient is an open condition. 
From this we see that if $\overline U\cap ( f^{c+ \varepsilon}\setminus f^{c-\varepsilon})$  does not intersect the set of critical points, we may  follow the flow of the vector field $v$, so that if $C$ is a cycle representing a homology class in $H_{*}(U\cap f^{c+ \varepsilon }, U\cap f^{c- \varepsilon })$ for $ \varepsilon $ small enough, then the flow of $v$ applied to $C$ shows that $C$ is homologous to a cycle in $U\cap f^{c- \varepsilon }$, hence $C$ is homologous to zero. This brings us to the following subsection.

\subsection{Topological  theory (according to  \texorpdfstring{\cite{Vichery2}}{Vichery2})}\label{topt}

Let $f$ be a continuous function on $X$. We define a {\it strict critical point} of $f$, as follows

\begin{definition}\label{def-10.6} Let $f$ be a continuous function. 
We define the set of {\it strict critical points} at level $c$ as  the set of points such that $U$ denoting a neighborhood of $x$, 
$$\lim_{U\ni x}\lim_{\varepsilon \to 0} H^*(U\cap f^{c+ \varepsilon}, U\cap f^{c}) \neq 0$$
If $f^{-1}(c)$ contains a strict critical point, it is called a critical level. Other points are called weakly regular points. 
\end{definition}

Note that we have for $\eta < \varepsilon $  maps
$$H^*(U\cap f^{c+ \varepsilon }, U\cap f^{c}) \longrightarrow H^*(U\cap f^{c+ \eta }, U\cap f^{c})$$
 while for $V \supset U$  we have a map
$$H^*(V\cap f^{c+ \varepsilon }, V\cap f^{c}) \longrightarrow H^*(U\cap f^{c+ \varepsilon  }, U\cap f^{c })$$
and the  limits  in the definition should be understood as direct limits. 

Even if $f$ is smooth, this does not coincide exactly with the usual notion of critical and regular point. For example if $f(x)=x^3$, the origin is critical but weakly regular, since there is no topological change for the sublevels of $f$ at $0$. 

From  Lemma \ref{Lemma-10.7} the first part of the following proposition follows 

\begin{proposition}Let $f$ be Lipschitz and satisfying the Palais-Smale condition above. Then   strict critical points at level $c$ are contained in 
$K_c$. Moreover if $f$ has a local maximum (resp. minimum) at $x$, then $x$ is a strict critical point. 
\end{proposition}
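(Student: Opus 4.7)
The plan is to prove the two assertions separately, each by a local argument at the candidate point $x$.

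For the first claim I would argue by contraposition. Suppose $x\notin K_c$, so $0\notin \partial_C f(x)$. By upper semi-continuity of $y\mapsto \partial_C f(y)$, equivalently lower semi-continuity of $\lambda_f$, there exist an open neighborhood $U_0$ of $x$ and $\delta>0$ with $\lambda_f\geq \delta$ on $U_0$. Lemma~\ref{lemma-10.5} then furnishes a Lipschitz pseudo-gradient $v$ on $U_0\cap f^{-1}[c-\varepsilon,c+\varepsilon]$ with $\|v\|\leq 1$ and $\langle v,w\rangle\geq \delta/2$ for every $w\in \partial_C f$. The flow $\eta^t$ of $-v$ decreases $f$ at rate at least $\delta/2$ while moving at unit speed. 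Choosing $U=B(x,r)\subset U_0$ and $\varepsilon<r\delta/8$, I run the flow for time $t\in [0,2\varepsilon/\delta]$ starting from $U'=B(x,r/2)$: trajectories remain inside $U$ while $U'\cap f^{c+\varepsilon}$ is carried into $f^{c-\varepsilon}$. This produces a deformation of pairs that trivializes the relative homology $H_*(U'\cap f^{c+\varepsilon},U'\cap f^{c-\varepsilon})$, and after passing to the direct limit in Definition~\ref{def-10.6} I conclude that $x$ is not a strict critical point.

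For the local minimum case of the second claim, I pick $U$ small enough that $f\geq c$ on $U$. Then $U\cap f^{c-\varepsilon}=\emptyset$ for every $\varepsilon>0$, so
$$H_*(U\cap f^{c+\varepsilon},\,U\cap f^{c-\varepsilon})=H_*(U\cap f^{c+\varepsilon}),$$
whose $H_0$ is non-zero since the set contains $x$, and shrinking $U$ preserves non-triviality of the limit in Definition~\ref{def-10.6}. For the local maximum case I would first reduce to the study of superlevel sets by considering $\tilde f = -f$ and applying the minimum argument to $\tilde f$: this yields $H_*(U\cap f^{\geq c-\varepsilon},\,U\cap f^{\geq c+\varepsilon})\neq 0$. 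I would then transfer this to sublevel sets either by a direct Alexander duality argument in a local chart (identifying $U$ with an $n$-ball and using that the local fundamental class at $x$ detects the missing neighborhood of $x$ in $U\cap f^{c-\varepsilon}$), or, more robustly, by replacing singular with \v{C}ech cohomology in Definition~\ref{def-10.6}, under which the required duality becomes formal.

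The main obstacle is precisely this last transfer in the local maximum case. The deformation argument for the first claim is routine once Lemma~\ref{lemma-10.5} is in hand; the only subtlety is the explicit coupling between $r$, $\varepsilon$ and $\delta$, chosen to keep trajectories inside the ambient neighborhood. The minimum case of the second claim is immediate. The genuine difficulty is that, for non-strict local maxima where $f^{-1}(c)$ has complicated geometry near $x$, the relative homology of sublevel pairs is not obviously non-zero, and one must either exploit regularity of the local maximum or commit to a cohomology theory (such as \v{C}ech) under which the limits in Definition~\ref{def-10.6} behave well.
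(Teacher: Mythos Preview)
Your argument for the first assertion is exactly the one the paper has in mind: the sentence preceding the proposition (``From this we see that following the flow of the vector field $v$\ldots'') is precisely your deformation argument, and your explicit choice of $r,\varepsilon,\delta$ to keep trajectories inside the ball is the right way to make it rigorous. One small arithmetic slip: decreasing $f$ by $2\varepsilon$ at rate $\geq \delta/2$ takes time up to $4\varepsilon/\delta$, not $2\varepsilon/\delta$, but your inequality $\varepsilon<r\delta/8$ still keeps the displacement below $r/2$.

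For the second assertion your treatment of the local minimum is identical to the paper's: shrink $U$ so that $U\cap f^{c-\varepsilon}=\emptyset$ and observe $H_0(U\cap f^{c+\varepsilon})\neq 0$. The paper in fact stops there and does not spell out the local maximum case at all, so your worry is well placed rather than excessive. Your two proposed fixes are both viable; the cleanest elementary route is probably the local--homology one you sketch: for $U$ a small ball with $f\leq c$ on $U$, one has $U\cap f^{c+\varepsilon}=U$ and $U\cap f^{c-\varepsilon}\subset U\setminus\{x\}$, and the generator of $H_n(U,U\setminus\{x\})$ can be represented by a small disk about $x$ with boundary pushed into $U\cap f^{c-\varepsilon}$ once $\varepsilon$ is small enough (using continuity of $f$ on $\partial U'$ for a slightly smaller ball $U'$). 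This avoids having to change cohomology theories. In short, your proof is correct and more complete than the paper's on this point.
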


\begin{proof} 
The second statement follows obviously from the fact that for a local minimum, we have  $H_0(f^{c+ \varepsilon }\cap U, f^{c- \varepsilon }\cap U)=H_0(f^{c+ \varepsilon }\cap U, \emptyset)\neq 0$ since $f^{c+ \varepsilon}\cap U$ is non-empty while $f^{c- \varepsilon}\cap U$ is empty. 

\end{proof}

\begin{definition} 
We denote by $d_tf(x)$ the set of $p$ such that there exists a smooth function $\varphi$ with $d\varphi(x)=p$ and $f(x)-\varphi(x)$ has a strict critical point at $x$, where the scalar product is the euclidean one in some chart near $z$ (so $f_p$ is only defined near $z$). This is called the topological differential at $z$. The set of all limits of  $d_{t}f(x_n)$  as $x_n$ converges to $z$ is denoted by $D_{t}f(z)$.
\end{definition} 

\begin{remark} 
The set $D_tf(x)$ coincides with $\partial f(x)$ as defined in Definition 3.6 of \cite{Vichery2}. 
 \end{remark} 
\begin{proposition}
The set $D_tf(z)$ is contained in $\partial_C f(z)$ and the convex hull of $D_tf(z)$  equals $\partial_C f(z)$. In particular if $f$ is $C^1$, $D_tf(x)=\{df(x)\}$. 
\end{proposition}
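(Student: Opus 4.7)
The statement splits into the inclusion $D_t f(x) \subseteq \partial_C f(x)$ and the equality $\mathrm{conv}(D_t f(x)) = \partial_C f(x)$. The first is essentially formal from the material already assembled; the second requires genuine input.

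\emph{First inclusion.} I would start by proving the pointwise fact $d_t f(y) \subseteq \partial_C f(y)$. If $p \in d_t f(y)$, then by definition $y$ is a strict topological critical point of $g := f - \langle p, \cdot\rangle$ at level $g(y)$ in the sense of Definition \ref{def-10.6}. The preceding proposition (strict topological critical points lie in $K_c$) then gives $0 \in \partial_C g(y)$; combining this with property (b) of the Clarke subdifferential and the smooth computation $\partial_C(-\langle p,\cdot\rangle)(y) = \{-p\}$ yields $p \in \partial_C f(y)$. To pass to the closure defining $D_t f(x)$, observe that the upper semi-continuity of $\partial_C f$ (property 4 of Clarke's calculus) forces $p = \lim_n p_n \in \partial_C f(x)$ whenever $p_n \in d_t f(y_n) \subseteq \partial_C f(y_n)$ with $y_n \to x$. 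Since $\partial_C f(x)$ is convex, $\mathrm{conv}(D_t f(x)) \subseteq \partial_C f(x)$ follows at once.

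\emph{Reverse inclusion.} For $\partial_C f(x) \subseteq \mathrm{conv}(D_t f(x))$ I would invoke Clarke's gradient representation theorem for locally Lipschitz functions,
$$
\partial_C f(x) \;=\; \mathrm{conv}\bigl\{\,\lim_{n\to\infty} \nabla f(y_n) \;:\; y_n \to x,\ f \text{ differentiable at } y_n\,\bigr\},
$$
valid thanks to Rademacher's theorem. It then suffices to show that every such gradient limit $p$ lies in $D_t f(x)$. The plan is, for each $n$, to produce a perturbed covector $p_n' = p + v_n$ with $v_n \to 0$ and a point $y_n' \in B(y_n, r_n)$, $r_n \to 0$, at which $f - \langle p_n', \cdot\rangle$ attains a local extremum: by the preceding proposition such a local extremum is automatically a strict topological critical point, so $p_n' \in d_t f(y_n')$; with $y_n' \to x$ and $p_n' \to p$ this yields $p \in D_t f(x)$ and the proof is complete.

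\emph{Main obstacle.} The genuinely delicate step is arranging that, for appropriate $v_n$ and $r_n$, the local extremum of $f - \langle p + v_n, \cdot\rangle$ on $\overline{B(y_n, r_n)}$ falls in the interior (otherwise it need not be a strict topological critical point of the function on an open neighborhood) and that the effective linear coefficient at the extremum still converges to $p$. Because $\nabla f(y_n) \to p$, the unperturbed function $f - \langle p, \cdot\rangle$ is almost flat at $y_n$ on the scale $r_n$, so a generic small shift $v_n$ suffices; an alternative is to add a tiny quadratic well $\varepsilon_n|y - y_n|^2$ to force the minimizer inside and then read off the shifted linear coefficient at that minimizer. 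In either formulation this perturbation-and-compactness analysis is precisely the content of Vichery's theorem in \cite{Vichery2} identifying $\partial_C f(x)$ with $\mathrm{conv}(D_t f(x))$, from which the proposition follows.
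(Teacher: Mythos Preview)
The paper's own ``proof'' is a bare citation to Theorems 3.14 and 3.20 of \cite{Vichery2}, so your proposal is in fact more detailed than what appears there; since you too ultimately defer the hard direction to \cite{Vichery2}, the two approaches coincide at the end. Your argument for the inclusion $D_tf(x)\subset\partial_Cf(x)$ is correct and is a useful unpacking of what the citation covers: $d_tf(y)\subset\partial_Cf(y)$ via Lemma~\ref{lemma-10.5} (strict critical points lie in $K_c$), and then upper semi-continuity of $\partial_Cf$ handles the closure.

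One caution on the reverse direction: your ``quadratic well'' alternative does not do what you claim. If $y^*$ is an interior minimizer of $f(y)-\langle p,y\rangle+\varepsilon_n|y-y_n|^2$, you obtain $0\in\partial_C f(y^*)-p+2\varepsilon_n(y^*-y_n)$, i.e.\ a point of $\partial_Cf$ near $p$; but $y^*$ is \emph{not} in general a local extremum of $f-\langle p',\cdot\rangle$ for any linear form $p'$, because the gradient of the quadratic term is not constant. So this variant produces Clarke subdifferentials, not elements of $d_tf(y^*)$, and hence does not by itself show $p\in D_tf(x)$. Your first variant (a generic tilt $v_n$ producing a genuine interior local extremum of a linear perturbation) is the right shape, but making it rigorous is exactly the non-trivial content of Vichery's theorem, as you acknowledge.
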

\begin{proof}
This is theorem 3.14 and 3.20 of \cite{Vichery2}. 
\end{proof}
The above notion is analogous to the one defined using microlocal theory of sheaves of  \cite{Kashiwara-Schapira}, as is explained in \cite{Vichery2}. Indeed, the
 singular support of a sheaf is a classical notion in sheaf theory (see \cite{Kashiwara-Schapira}), defined as follows. The derived category of bounded complexes of sheaves over $N$, $D^b(N)$ is defined as the set of equivalence classes for finite complexes of sheaves
  $$0 \longrightarrow \F_1 \longrightarrow \F_2 \longrightarrow ... .. \longrightarrow \F_k\longrightarrow 0$$ 
 for the relation generated by quasi-isomorphism : $\cF$ and $\cG$ are quasi-isomorphic if there is a morphism between them inducing a cohomology isomorphism. 
 Then 
 $\Gamma (U; \bullet )$ denotes the section functor: $\Gamma (U; \F) $ is the set of sections of $\F$ over $U$. We use the same notation for the extension of $\Gamma (U, \bullet )$ to the set of chain complexes of sheaves: it sends a  complex 
 $$0 \longrightarrow \F_1 \longrightarrow \F_2 \longrightarrow ... .. \longrightarrow \F_k\longrightarrow 0$$ to the complex
 $$0 \longrightarrow \Gamma(U,\F_1) \longrightarrow\Gamma(U,\F_1) \longrightarrow ... .. \longrightarrow \Gamma(U,\F_k)\longrightarrow 0$$
  and $R\Gamma (U; \bullet )$ is the derived functor : we replace the complex $\cF$ by a quasi-isomorphic complex of injective sheaves, $\cI$, then $R\Gamma (U; \bullet )$ is the equivalence class of $R\Gamma (U; \cI )$.
 
  \begin{definition} 
  Let $\cF$ be a sheaf on $X$. Then $(x_{0},p_{0}) \notin SS(F)$ if for any $p$ close to $p_{0}$, and $\psi$ such that $p=d\psi(x)$ and $\psi(x)=0$ we have $$ R\Gamma (\{\psi\leq 0\}, \cF)_{x}=0$$
  This is equivalent to $\lim_{W\ni x}H^*(W, W\cap \{\psi\leq 0\};\cF)=0$. 
  \end{definition} 
  The connection between the two definitions is as follows.  Consider the sheaf $\cF_{f}$ on $M\times {\mathbb R} $ that is the constant sheaf on $\{(x,t) \mid f(x) \geq  t \}$ and vanishes elsewhere. Then $SS(\cF_{f})=\{(x,t,p,\tau ) \mid \tau D_{t}f(x)=p \}$. It is not hard to see  that as expected, $SS(\cF_{f})$ is a conical coisotropic submanifold.

It follows from the   sheaf theoretic Morse lemma from \cite{Kashiwara-Schapira} (Corollary 5.4.19, page 239) that 
\begin{proposition} 
Let $f$ be a continuous function. Let us assume $c$ is a weakly regular level. Then for $ \varepsilon $ small enough,  $H^*(f^{c+ \varepsilon }, f^{c- \varepsilon })=0$.
\end{proposition} 

  \begin{proof}
Let $k_X$ be the sheaf of locally constant functions. If $f^{-1}(c)$ contains no point such that $D_tf(x)=0$, then according to the sheaf-theoretic Morse lemma, 
 $$R\Gamma (f^{c+ \varepsilon}; k_X) \longrightarrow 
  R\Gamma (f^{c- \varepsilon}; k_X)$$ is an isomorphism, but this implies by the long exact sequence in cohomology that
  $H^*(f^{c+ \varepsilon},f^{c- \varepsilon})=0$. 
  \end{proof}
  
  Finally we have
  
  \begin{proposition} \label{local-global-crit}
Let $f$ be a Lipschitz function satisfying the Palais-Smale condition above. Let us assume $f^{-1}(c)$ contains an {\bf isolated} strict critical point. Then for $ \varepsilon $ small enough,  $H^*(f^{c+ \varepsilon }, f^{c- \varepsilon })\neq 0$.
\end{proposition}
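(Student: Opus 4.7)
The plan is to localize the computation of $H^*(f^{c+\varepsilon}, f^{c-\varepsilon})$ at the isolated strict critical point via a combination of excision and a pseudo-gradient deformation away from that point. The hypothesis already gives non-triviality of the local relative (co)homology at $x_0$ by the very definition of a strict critical point; the work is to show that this local contribution survives globally.

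First, I would use the hypothesis that $x_0 \in f^{-1}(c)$ is an isolated strict critical point to select an open neighborhood $V \ni x_0$ with compact closure such that $\overline V$ contains no other strict critical point at level $c$, and such that
\[
H_*(V \cap f^{c+\varepsilon}, V \cap f^{c-\varepsilon}) \neq 0
\]
for all sufficiently small $\varepsilon > 0$ (this is direct from Definition \ref{def-10.6}). Next, I would argue that by shrinking $\varepsilon_0 > 0$, the compact strip $f^{-1}([c-\varepsilon_0, c+\varepsilon_0]) \setminus V$ contains no strict critical point at level $c$: any accumulation of points with vanishing $\lambda_f$ (in the Lipschitz case) or of "topologically critical" points (in the continuous case) would, by Palais--Smale, converge to a strict critical point at level $c$, which by our choice of $V$ must be $x_0 \in V$, contradicting the location in the complement of $V$.

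The core step is then to produce a deformation of $f^{c+\varepsilon}$ into $f^{c-\varepsilon} \cup (V \cap f^{c+\varepsilon})$ that is the identity inside a smaller neighborhood $V' \Subset V$ of $x_0$. In the Lipschitz case, I would construct the pseudo-gradient $v$ of Lemma \ref{lemma-10.5} on $f^{-1}([c-\varepsilon, c+\varepsilon]) \setminus V'$ (where the lower bound $\lambda_f \geq b > 0$ holds by the previous paragraph and compactness), cut it off by a bump function vanishing near $V'$, and flow for time $\sim 2\varepsilon/b$. In the continuous case, one applies the sheaf-theoretic Morse lemma of \cite{Kashiwara-Shapira} locally around each point of $f^{-1}(c) \setminus V$ (which is regular in the sense of Definition \ref{def-10.6}) and glues these local deformations using Palais--Smale compactness. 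Either way, one obtains an inclusion of pairs
\[
\bigl((V \cap f^{c+\varepsilon}) \cup f^{c-\varepsilon},\, f^{c-\varepsilon}\bigr) \hookrightarrow (f^{c+\varepsilon}, f^{c-\varepsilon})
\]
that is a homotopy equivalence. An excision (cutting out $f^{c-\varepsilon} \setminus \overline{V'}$) then identifies the left-hand relative cohomology with $H^*(V \cap f^{c+\varepsilon}, V \cap f^{c-\varepsilon})$, which is non-zero by the first step, yielding the result.

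The main obstacle is the deformation step. One must produce a pseudo-gradient flow that moves everything in $f^{c+\varepsilon}$ below $c-\varepsilon$ except inside $V'$, using only that the complement of $V'$ contains no strict critical points at level $c$. In the Lipschitz setting this is essentially a standard application of Lemma \ref{lemma-10.5} with a cutoff, but one must verify the uniform lower bound on $\lambda_f$ on the annular region $\overline V \setminus V'$ and manage the cutoff so that orbits do not re-enter $V'$ before reaching $f^{c-\varepsilon}$; in the purely continuous case one must replace the vector-field argument by a sheaf-theoretic or microlocal deformation and patch local homotopies, which is the delicate part requiring the Kashiwara--Shapira Morse lemma quoted just before the statement.
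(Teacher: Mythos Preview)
Your approach is the same in spirit as the paper's --- both use the pseudo-gradient of Lemma~\ref{lemma-10.5} to deform and then localize via excision --- but there is one substantive difference in how the deformation is set up. You flow $f^{c+\varepsilon}$ down to $f^{c-\varepsilon}$ \emph{everywhere} outside a small neighborhood $V'$ of $x_0$, and your justification in step~2 (``which by our choice of $V$ must be $x_0$'') presumes that $x_0$ is the \emph{only} strict critical point at level $c$. The hypothesis, however, is only that $x_0$ is \emph{locally} isolated among strict critical points; nothing rules out further strict critical points at level $c$ far from $x_0$, and in their presence your global deformation is unavailable.

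The paper sidesteps this by deforming only in an annulus $V\setminus U$ around $x_0$ (where local isolation does guarantee $\lambda_f\geq b>0$), so that after the deformation the sublevel sets agree on $\overline{V\setminus U}$, and then invoking the elementary splitting
\[
H^*(f^{c+\varepsilon}, f^{c-\varepsilon}) \;\cong\; H^*(f^{c+\varepsilon}\cap U,\, f^{c-\varepsilon}\cap U)\ \oplus\ H^*\bigl(f^{c+\varepsilon}\cap (X\setminus V),\, f^{c-\varepsilon}\cap (X\setminus V)\bigr).
\]
The first summand is non-zero by the definition of strict critical point, so the left side is non-zero whatever happens outside $V$. Your argument becomes correct under the stronger reading of ``isolated'' as ``unique at level $c$''; under the intended weaker reading, the fix is precisely to restrict the flow to the annulus, at which point your excision step collapses to the paper's direct-sum argument.
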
 

  \begin{proof}
This follows from the fact that if a sheaf is equal to a skyscraper sheaf\footnote{i.e. a sheaf supported at a single point.} near $U$ it has non-trivial sections.  A more elementary approach is as follows. First notice that if we have two sets $B\subset A$ and open sets $\overline U \subset V$ and $A\cap (\overline {(V\setminus U)}=B \cap (\overline {(V\setminus U)}$ then $$H^*(A,B)=H^*(A\cap U, B\cap U) \oplus H^*(A\cap (X\setminus V), B\cap (X\setminus V))$$ Now if $x$ is an isolated strict critical point, of $f$ according to lemma \ref{Lemma-10.7} (i.e. lemma 3.3 in  \cite{Chang}), we can deform $f^{c+ \varepsilon } $ to $f^{c- \varepsilon }$ in $V\setminus U$, for some $x\in U \subset \overline U \subset V$. We assume here that $V$ does not contain other critical points than $x$. Thus $$H^*(f^{c+ \varepsilon }, f^{c- \varepsilon })= H^*(f^{c+ \varepsilon }\cap U, f^{c- \varepsilon }\cap U) \oplus H^*(f^{c+ \varepsilon }\cap (X\setminus V), f^{c- \varepsilon }\cap (X\setminus V)) $$
and since the first term of the right-hand side is non-zero, so is the left-hand side. 

\end{proof}

  Note that if we have an open set $\Omega$ where $f$ is constant, then any $x \in \Omega$ is a strict critical point, but this does not imply $H^*(f^{c+ \varepsilon }, f^{c- \varepsilon })\neq 0$. So the above proposition does not hold if the critical point is not isolated. Take as an example $f(x) <0$ for $x<-1$, $f(x)>0$ for $x>1$ and $f=0$ on $[-1,1]$. Then $H^*(f^b,f^a)=0$ for all $a<b$, while $0 \in d_tf(0)$. 
  
This prompts the following definition

\begin{definition}\label{Def-9.14} The real number $c\in {\mathbb R} $ is a {\it strong  critical value} of $f\in C^{0,1}(X)$ if $\lim_{ \varepsilon \to 0} H^*(f^{c+ \varepsilon }, f^{c- \varepsilon })\neq 0$. 
If $x \in f^{-1}(c)$, $x$ is a {\it strong critical point} if $$\lim_{ \varepsilon \to 0}H^*(f^{c+ \varepsilon }, f^{c-\varepsilon }) \longrightarrow  \lim_{ x\in U}\lim_{ \varepsilon \to 0}H^*(f^{c+ \varepsilon }\cap U, f^{c-\varepsilon}\cap U)$$
is non-zero. 
For $X= {\mathbb R} ^n$, we say that the {\it strong differential} of $f$ at $x_0$ is the set of $\alpha$ such that $f_{\alpha}(x)=f(x)-\langle \alpha, x\rangle$ has a strong critical point at $x_0$. We denote it by $d_sf(x_0)$. Finally we denote by $D_sf(x_0)$ the set of limits of strong differentials at $x_0$, that is the set of limits of $d_sf(x_n)$ such that $x_n$ converges to $x_0$. 
\end{definition}

An application of Mayer-Vietoris implies
\begin{proposition} \label{Prop.10-5}  Let $f$ be a Lipschitz function satisfying the (PS) condition. Assume   $\lim_{\varepsilon \to 0}H^*(f^{c+ \varepsilon}, f^{c- \varepsilon})\neq 0$. Then $f^{-1}(c)$ contains a strong critical point. 
\end{proposition} 
\begin{proof} Indeed, assume for $U=U_j$ small enough the map $H^*(f^{c+ \varepsilon },f^c) \longrightarrow H^*(f^{c+ \varepsilon }\cap U, f^c\cap U)$  vanishes. For typographical reasons, we shall write $H_f^*(A)$ instead of $H^*(f^{c+ \varepsilon }\cap A ,f^c\cap A)$.
We then write the long exact sequence for $U_1\cap U_2$ that is

\xymatrix {
H_f^*(U_1\cup U_2) \ar[r]& H_f^*(U_1) \oplus H_f^*(U_2) \ar[r] & H_f^*(U_1\cap U_2) \ar[r]^{\delta^*} &H_f^{*+1}(U_1\cup U_2) \ar[r]&...\\
H_f^*(X) \ar[r]\ar[u]& H_f^*(X)  \oplus H_f^*(X) \ar[u]^0\ar[r] & H_f^*(X)\ar[u]^0 \ar[r]^0 &H_f^{*+1}(X) \ar[r]\ar[u]&..
}
So we conclude by the $5$-lemma that the map $H_f^*(X) \longrightarrow H_f^*(U_1\cup U_2)$ vanishes. By induction, we get that the map 
$H_f^*(X) \longrightarrow H_f^*(U_1\cup...\cup U_k)$ vanishes. Now the (PS) condition implies that outside a compact set, we may deform $f^{c+ \varepsilon }$ to $f^{c- \varepsilon }$. So by compactness, we may reduce ourselves to the case where $f^{c+ \varepsilon }\setminus f^{c- \varepsilon }$ is covered by a finite number of such open sets, and we eventually get that the identity map $H^*( f^{c+\varepsilon }, f^{c- \varepsilon }) \longrightarrow H^*( f^{c+\varepsilon }, f^{c- \varepsilon })$ vanishes, so $H^*( f^{c+\varepsilon }, f^{c- \varepsilon })=0$.

\end{proof} 
Clearly a strong critical point is a strict critical point, but the converse need not be true. Note that the notion of strong critical point is not purely local.  However the converse holds if either the critical point is isolated, or the critical point is a strict  local minimum (or a strict local maximum), that is there is neighborhood $U$ of $K_c$ the set of critical points at level $c$,  such that for all $y\in U\setminus K_c$ we have $f(y)>f(x)$ (note that this does not imply that the critical point is isolated) 

We now prove that for a smooth function,  the various differentials coincide.
\begin{Cor} \label{Cor-10.16}
For a smooth (i.e. $C^{\infty}$) function $f$ 
and any $x_0\in M$, 
we have 
$$\{ D_sf(x_0) \mid x_0\in f^{-1}(c)\}=\{ df(x_0)\mid x_0\in f^{-1}(c)\}$$
\end{Cor}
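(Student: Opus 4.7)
The plan is to establish the two set-theoretic inclusions using the chain $D_sf\subset D_tf\subset\partial_C f$ and Sard's theorem applied to the smooth map $df\colon\mathbb{R}^n\to\mathbb{R}^n$. For the easy inclusion $\bigcup_{x_0\in f^{-1}(c)}D_sf(x_0)\subset\{df(x_0)\mid x_0\in f^{-1}(c)\}$, I use that smoothness makes the Clarke subdifferential $\partial_C f(x_0)=\{df(x_0)\}$ a singleton, so the chain above forces $D_sf(x_0)\subset\{df(x_0)\}$ pointwise; in particular every element of the left-hand side is of the form $df(x_0)$ for the $x_0$ from which one is taking the limit.

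For the reverse inclusion, given $x_0\in f^{-1}(c)$ it suffices to exhibit a sequence $x_n\to x_0$ with $df(x_n)\in d_sf(x_n)$, since then continuity of $df$ yields $df(x_0)=\lim_n df(x_n)\in D_sf(x_0)$. The key mechanism is this: if $\alpha$ is a regular value of $df$ and $x\in df^{-1}(\alpha)$, then $d^2f(x)$ is invertible, so $x$ is an isolated Morse critical point of $f_\alpha(y)=f(y)-\langle\alpha,y\rangle$ at level $c'=f(x)-\langle\alpha,x\rangle$; the Morse local model makes $H^*(U\cap f_\alpha^{c'+\varepsilon},U\cap f_\alpha^{c'-\varepsilon})$ nonzero, and isolatedness then lets Proposition~\ref{local-global-crit} promote this to global non-vanishing, so $\alpha\in d_sf(x)$. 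Sard's theorem furnishes a dense set of regular values of $df$ near $df(x_0)$, and at any $x_0$ with nondegenerate Hessian the implicit function theorem provides preimages $x_n\to x_0$ of such regular values, finishing the argument in the non-degenerate case.

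The main obstacle is the degenerate case, where $d^2f(x_0)$ is singular and the image of $df$ near $x_0$ may lie inside a proper lower-dimensional subset of $\mathbb{R}^n$ with no regular values whatsoever (the model pathology being $f(x_1,x_2)=x_1^2$, whose differential lands in the $x_1$-axis). In such situations I would verify $df(x_0)\in d_sf(x_0)$ directly by computing the local and global sublevel-set cohomologies of $f_{df(x_0)}$ at level $c'$: when $d^2f$ has constant rank $k<n$ near $x_0$ the constant-rank theorem reduces the problem via a K\"unneth decomposition to a non-degenerate $k$-dimensional problem covered by the Morse-theoretic argument above, and the remaining variable-rank strata are handled by induction on dimension. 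Once $df(x_0)\in D_sf(x_0)$ is secured in every regime, the reverse inclusion follows and the corollary is proved.
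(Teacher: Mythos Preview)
Your easy inclusion and the nondegenerate branch of the reverse inclusion are correct and match the paper: both use Sard's theorem applied to $df$ to find regular values $\alpha_n\to df(x_0)$, so that $f_{\alpha_n}$ is Morse and any $x_n\in df^{-1}(\alpha_n)$ is an isolated nondegenerate (hence strong) critical point, giving $\alpha_n\in d_sf(x_n)$.

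The degenerate case, however, is a genuine gap. Your plan is to use the constant-rank theorem to split $f_{df(x_0)}$ locally as a nondegenerate quadratic in $k$ variables plus a function of the remaining $n-k$ variables, and then invoke a K\"unneth argument. But the definition of \emph{strong} critical point is not local: it asks that the map from the \emph{global} sublevel cohomology $H^*(f_\alpha^{c'+\varepsilon},f_\alpha^{c'-\varepsilon})$ to the local one be nonzero. A coordinate splitting valid only on a small ball around $x_0$ gives no control over the global sublevel sets, so there is no K\"unneth decomposition to appeal to. Your fallback of ``induction on dimension'' for the variable-rank stratum is entirely unexplained: you do not say what statement is being inducted on, nor how the inductive step would go. In your own model $f(x_1,x_2)=x_1^2$, the image of $df$ is a line, so there are \emph{no} regular values $\alpha$ with $df^{-1}(\alpha)\neq\emptyset$; one is forced into a direct computation of sublevel cohomology, and your outline does not show how such computations organize into a general proof.

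One structural difference worth noting: the paper only seeks $df(x_0)\in D_sf(x_0')$ for \emph{some} $x_0'\in f^{-1}(c)$, whereas you aim for the stronger conclusion $x_0'=x_0$. This is why the paper does not invoke the implicit function theorem at $x_0$ and does not discuss the degenerate case at all; it is content to let the $x_n$ accumulate anywhere on the level set. Relaxing your target in this way would let you bypass the degenerate-Hessian analysis altogether, though you would then need to argue (as the paper asserts but does not fully justify) that the $x_n$ can be chosen with $f(x_n)\to c$.
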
 
\begin{proof} 
It is clear that for a smooth function, $f$, if $d_sf(x)$ is non-empty, it is equal to the singleton containing $df(x)$. Now it is enough to show that if $df(x_0)=0$, $f(x_0)=c$, there is a sequence $x_n$ such that $df(x_n)=\alpha_n$,  $x_n$ is an isolated solution of $df(x)=\alpha_n$ and $\lim_n f(x_n)=c$. But by Morse-Sard's theorem, the set of values of $df$ at which $d^2f(x)$ is degenerate has measure zero, so we can find a sequence $\alpha_n \to 0$ such that $f(x) - \langle \alpha_n,x\rangle$ is Morse, hence $\alpha_n=d_sf(x_n)=df(x_n)$, $df(x_n) \to df(x_0)$ and $\lim_n f(x_n)=f(x_0)=c$.  \end{proof} 

\begin{remark} 
For example for $f(x)=x^3$, we see that $d_sf(0)=\emptyset$, but    for $x_0\neq 0$ $d_sf(x_0)=3x_0^2$ because $x^3-3x_0^2x$ has a strict local minimum at $x_0$ since $(x^3-3x_0^2x)''=3x_0^2 >0$. However $\lim_{x\to 0} d_sf(x_0)=0=D_sf(0)$. 

One thus gets the impression that $D_sf(x_0)=\{df(x_0)\}$ for all $x_0$ but this does not hold in general. Take as a counterexample a function $f$ such that $f(x)<0$ for $x<-1$, $f(x)>0$ for $x>1$, in both regions $f'(x)>0$ and finally  $f=0$ on $[-1,1]$.

\end{remark} 

\begin{proposition} \label{near-strong}
Assume $(f_k)_{k\geq 1}$ is a sequence of continuous functions that $C^0$-converges to $f$. Then if $p\in d_sf(x)$ there is a sequence $(x_k)_{k\geq 1}$ such that for $k$ large enough, $p\in d_sf_k(x_k)$. In particular, if $\alpha$ belongs to the set $\{d_sf(x)\mid x \in N\}$, then, for $k$ large enough, $\alpha \in \{d_sf_k(x)\mid x \in N\}$
\end{proposition} 

\begin{proof} 
Indeed, this follows from the fact that $$ H^*(g^{c+ \varepsilon }, g^{c- \varepsilon })= \lim_k H^*(g_k^{c+ \varepsilon }, g_k^{c- \varepsilon })$$ applied to $g=f_p$,  so if $H^*(f_p^{c+ \varepsilon }, f_p^{c- \varepsilon })\neq 0$ the same holds for $(f_k)_p$ for $k$ large enough. 
\end{proof} 

\begin{Cor}
    Let $h=\lim_j h_j$ where the $h_j$ are smooth in the complement on an closed set of measure zero (depending on $j$). Then
    $$\partial_Ch(x)\subset \mathrm{Conv} \left\{ \lim d_sh_j(x_j) \mid \lim_j x_j=x\right \}$$
\end{Cor}
\begin{proof}
    We have by Proposition A3 and Corollary A.19 that for all $j$ $$dh_j(x)\in \left\{\lim d_sh_j(x_k)\mid \lim h_j(x_k)=h_j(x)\right\}$$ for $x$ in the smooth open region. Since by Proposition A.5 $$\partial_Ch_j(x)= \mathrm{Conv} \left\{lim_{x_k\to x} dh_j(x_k)\right\}$$ 
    we get
   \begin{gather*}
       \partial_Ch(x) \subset \lim_j \partial_Ch_j(x)\subset \mathrm{Conv} \left\{ \lim_{j,k} d_sh_j(x_k) \mid h_j(x_k) \to h(x)\right \}=\\
    \mathrm{Conv} \left\{ \lim d_sh_j(x_j) \mid \lim_j h(x_j)=h(x)\right \}
\end{gather*}
\end{proof}
\subsection{A lemma on the set of subdifferentials}\label{Section-9}

 We  have
\begin{lemma}\label{fundlemma} Let $f$ be a compact supported function on $ {\mathbb R}^n$. If $f$ is non-constant then the set of $d_s f (x)$ as $x$ describes $ {\mathbb R}^n$ must contain a neighborhood of $0$. More precisely if  $\supp(f)\subset B(0,1)$, we have $$\{ d_sf(x) \mid x \in B(0,1)\} \supset B(0, \Vert f\Vert_{C^0}/4)$$
\end{lemma}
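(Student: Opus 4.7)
The plan is to exhibit, for every $\alpha$ with $\vert \alpha \vert < \Vert f\Vert_{C^0}/4$, an interior point $x_0 \in B(0,1)$ which is a local maximum (or minimum) of the tilted function $f_\alpha(x) = f(x) - \langle \alpha, x\rangle$, and then invoke the principle established in the preceding subsection (Section \ref{topt}) that a local extremum is a strong critical point. By definition this then gives $\alpha \in d_s f(x_0)$.

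\textbf{Setup.} Write $M = \Vert f\Vert_{C^0}$. Since $f$ is non-constant, continuous and compactly supported in $B(0,1)$, we have $M>0$ and at least one of $\max f = M$ or $\min f = -M$ holds. The statement is symmetric under $f \mapsto -f$ (since $d_s(-f)(x) = -d_sf(x)$ and $-B(0,M/4)=B(0,M/4)$), so we may assume $f(x^{*})=M$ for some $x^{*}\in B(0,1)$. Because $\supp(f)\subset B(0,1)$, the function $f$ vanishes identically on $\partial B(0,1)$ and outside.

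\textbf{Localizing the maximum of $f_\alpha$.} Fix $\alpha$ with $\vert\alpha\vert < M/4$. At the distinguished point we estimate
\[
f_\alpha(x^{*}) \;=\; M - \langle \alpha, x^{*}\rangle \;\geq\; M - \vert\alpha\vert\, \vert x^{*}\vert \;>\; 3M/4,
\]
using $\vert x^{*}\vert<1$. On the boundary $\partial B(0,1)$, where $f\equiv 0$, we have $f_\alpha(y)=-\langle\alpha,y\rangle$ and hence $\vert f_\alpha(y)\vert \leq \vert\alpha\vert < M/4$. Therefore the maximum of $f_\alpha$ over the compact set $\overline{B(0,1)}$ is attained at some $x_0$ in the open ball $B(0,1)$, with $f_\alpha(x_0)\geq 3M/4$; moreover, any open ball around $x_0$ contained in $B(0,1)$ witnesses that $x_0$ is a genuine local maximum of $f_\alpha$ on all of $\mathbb{R}^{n}$ (since $f_\alpha$ on the exterior of $B(0,1)$ is the linear $-\langle \alpha,\cdot\rangle$, bounded by $\vert\alpha\vert$ near $\partial B(0,1)$, hence strictly below $3M/4$ in a neighborhood).

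\textbf{From local max to strong critical point.} The remark in the preceding subsection states that a local extremum of a continuous function is a strong critical point (the converse to "strong $\Rightarrow$ strict" holds at local extrema). Applying this with $c=f_\alpha(x_0)$, we conclude that $x_0$ is a strong critical point of $f_\alpha$, so by definition of $d_s f$, $\alpha \in d_s f(x_0)$ with $x_0\in B(0,1)$. This yields the desired inclusion $B(0,M/4)\subset \{d_s f(x) \mid x\in B(0,1)\}$.

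\textbf{Main obstacle.} The elementary part (the max of $f_\alpha$ lands in the interior) is forced by the purely numerical gap $3M/4$ vs.\ $M/4$. The only delicate point is the identification of a local maximum $x_0$ with a \emph{strong}, not merely strict, critical point; the constant case (a plateau at the top level) requires the Mayer--Vietoris-type decomposition used in the proof of Proposition \ref{local-global-crit} to split off the contribution coming from a neighborhood of the plateau and conclude that it survives in the global relative cohomology $H^{*}(f_\alpha^{c+\varepsilon},f_\alpha^{c-\varepsilon})$. Once this is granted, the rest of the argument is routine.
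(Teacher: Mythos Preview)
Your argument is essentially the paper's own proof, with the roles of maximum and minimum interchanged (which is harmless by the $f\mapsto -f$ symmetry you note). Both proofs compare the value of $f_\alpha$ at an interior extremizer of $f$ with its values on $\partial B(0,1)$, obtain the numerical gap $3M/4$ versus $M/4$, conclude that $f_\alpha$ attains an interior extremum, and then invoke the assertion from Section~\ref{topt} that a local extremum is a strong critical point. Your write-up is in fact more explicit than the paper's about why the extremum lies strictly inside $B(0,1)$ and about the one genuinely nontrivial step (passing from ``local extremum'' to ``strong critical point'' when the extremum might be a plateau).
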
 \label{lemma-8.18}
\begin{proof} Assume for simplicity that $f$ vanishes outside the unit ball, $B$. 
Consider the function $f_{p}(x)= f(x)- \langle p , x-z \rangle$. We claim that for $p$ small enough this function has either a local  minimum or a local maximum and therefore $p \in \partial f(z)$. Indeed, $f=f_{0}$ has either a strictly negative minimum or a strictly positive maximum. Assume we are in the first case and let $x_0$ be the minimum. Then $f(x_0)\leq - \varepsilon_0\leq \min_{u \in \partial B} f(u)- \varepsilon_{0}$ for some   $ \varepsilon _{0}>0$. For $p$ small enough (take $ \vert p \vert \leq \frac{ \varepsilon_0}{4}$), the same holds for $f_{p}$ with a smaller constant, that is $$f_{p}(x_{0})\leq \min_{u \in \partial B} f_{p}(u)- \frac{1}{2} \varepsilon_{0}$$ As a result $f_{p}$ must have a global minimum, which is necessarily a strong critical point,  so that $0 \in \{ d_sf_p(x) \mid x \in B\}$,
 in other words $p\in\{ d_sf(x) \mid x \in B\}$. 
\end{proof} 

\subsection{Subdifferential of  selectors}

We shall need some  classical results about the connection between the different subdifferentials. First of all if $f$ is a bounded function defined on a Banach space, we define
the approximate differential of $f$ at $x$, denoted by $\partial_Af(z)$ as follows (we refer to \cite{Ioffe1} for details).
First
$$d^-f(x,h)= \liminf_{\substack{u\to h \\ t\searrow 0}} t^{-1}\left ( f(x+tu)-f(x) \right )  $$
then
$$\partial^-f(x)=\{ x^* \in X^* \mid \langle x^*, h\rangle \leq d^-f(x,h), \forall h\}$$
and finally the approximate differential is defined as 
$$\partial_Af(z)=\limsup_{\substack{ x\to z\\f(x) \to f(z)}} \partial^-f(x)$$
Note that in our case $f$ will always be continuous, so the condition $f(x) \to f(z)$ is not necessary in the definition. An immediate consequence of the definition (see \cite{Ioffe1} (1.1), p. 392) is that
$$\partial_Af(z)=\limsup_{x\to z} \partial_Af(x)$$

Let $(L_k)_{k\geq 1}$ be a sequence of smooth Lagrangians Hamiltonianly isotopic to the zero section in $T^*(N\times M)$ such that $L_k$ $\gamma$-converges to $L\in \widehat {\mathcal L}$. 
Let $u_k(x)=c(\theta, L_{k,x})$ and $u(x)=c(\theta, L_x)$ where $\theta \in H^*(M)$ and $x\in N$. 
\begin{definition} \label{Definition-10.19}
Let $L$ be an exact Lagrangian  in $T^*M$ and $f_L$ such that $\lambda_{L}=df_L$. We denote by ${\rm{Conv}}_x(L)$  the union of the convex envelopes of the $L\cap T_x^*M$. We denote by ${{\rm{LConv}}_x}(L)$  the union of the convex envelopes of the $L\cap T_x^*M \cap f_L^{-1}(c)$ where $c$ varies in $ {\mathbb R}$.

More generally if $L$ is Lagrangian in $T^*(N\times M)$, we denote by ${\rm{Conv}}_{N,x}(L)$ the union of the convex hulls of $L\cap (0_N\times T_x^*M)$, and 
${{\rm{LConv}}_{N,x}}(L)$ the union of the convex hulls of $L\cap (0_N\times T_x^*M)\cap f_L^{-1}(c)$ as $c$ varies in $ {\mathbb R}$ .
\end{definition} 
Note that   ${\rm{LConv}}_{N,x}(L) \subset {\rm Conv}_{N,x}(L)$ and\footnote{${\rm{LConv}}$ is for Levelwise Convex hull.} both are closed sets. 

\begin{figure}[H]
\begin{overpic}[width=6cm]{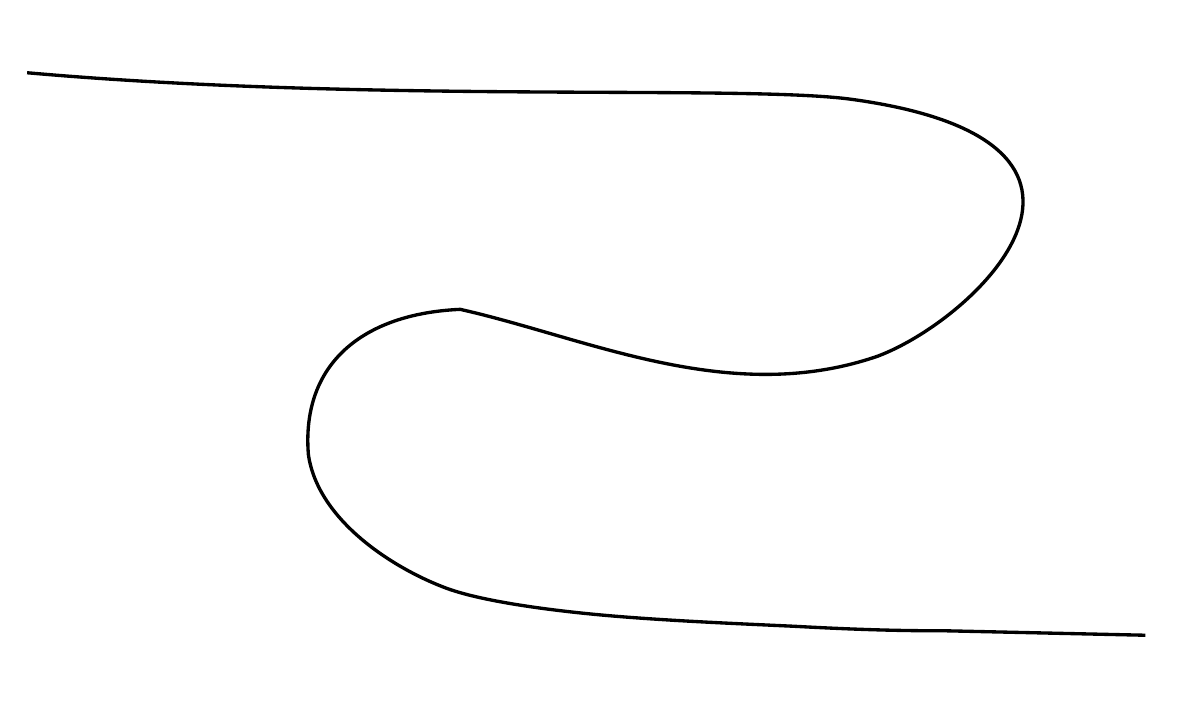}
    \put (95,12){$L$}  
\end{overpic}
\begin{overpic}[width=6cm]{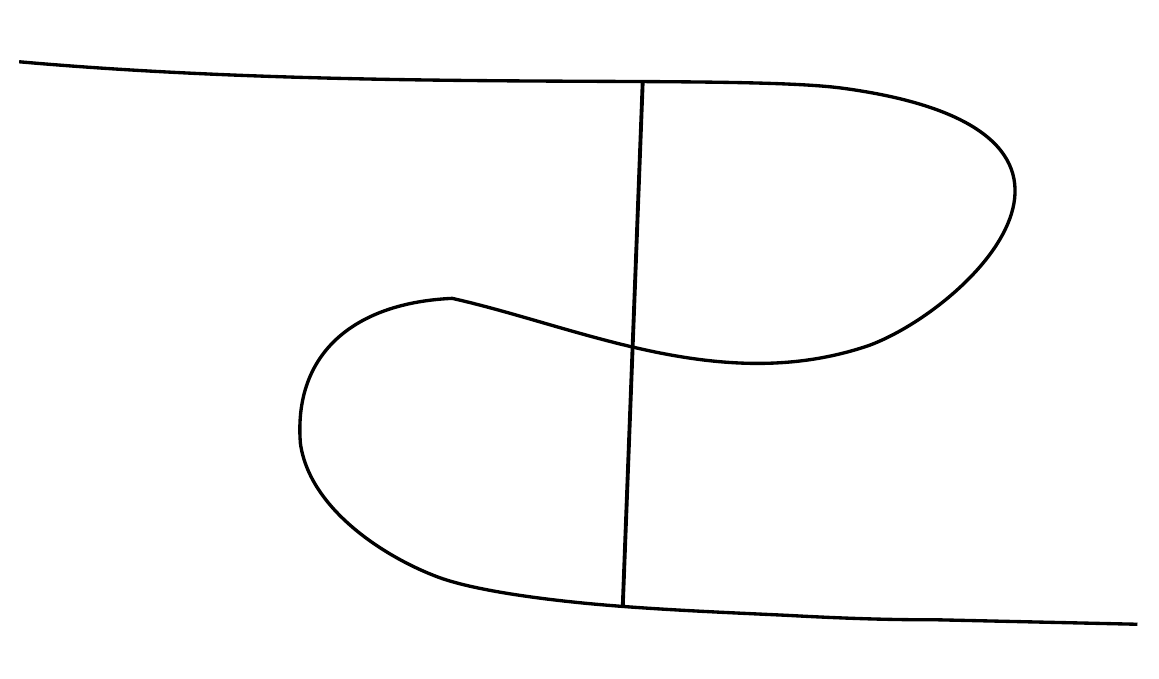}
    \put (95,12){${{\rm{LConv}}_{N}}(L) $}  
\end{overpic}
\center\begin{overpic}[width=6cm]{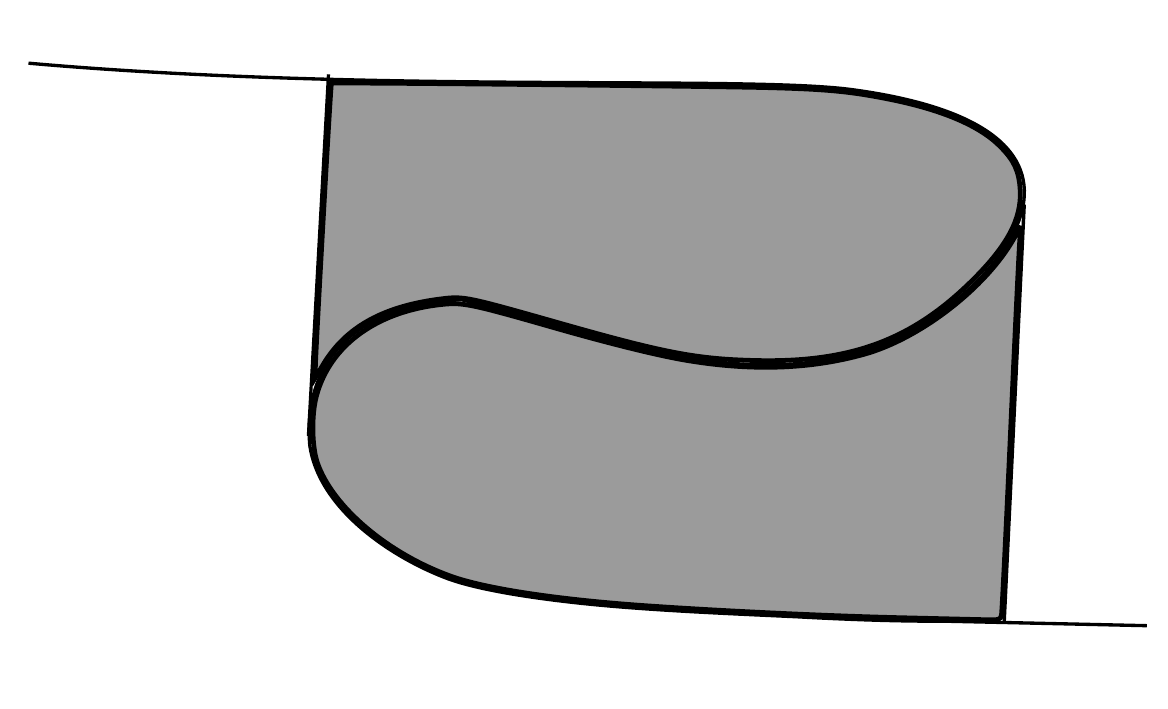}
    \put (95,12){${\rm Conv}_{N}(L)$}  
\end{overpic}
\caption{The Lagrangian $L$ and the sets ${{\rm{LConv}}_{N}}(L) $ and ${\rm Conv}_{N}(L)$. For ${{\rm{LConv}}_{N}}(L) $ the two "ears" have equal area.
}\label{fig-1}
\end{figure}

The following can be considered as an extension of theorem 2.1 (4) page 251 of \cite{Clarke1} (for the case where $\theta$ is the fundamental class of $M$). 

\begin{proposition} \label{Prop-10.17}
Let $(L_k)_{k\geq 1}$ be a sequence of smooth Lagrangians Hamiltonianly isotopic to the zero section in $T^*(N\times M)$ such that $L_k$ $\gamma$-converges to $L\in \widehat {\mathcal L}$.
We have \begin{enumerate} [label=\theenumi] 
\item for all $k$, $\partial_C u_k(x) \subset {\rm{LConv}}_{N,x}(L_k)$ 
\item  $\partial_Cu(x) \subset {\rm Conv}_{N,x}(\limsup_k  {\rm{LConv}}_{N,x}(L_k))$. 
\end{enumerate} \end{proposition}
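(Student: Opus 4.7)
The plan is to prove the first inclusion via Clarke's structure theorem for the Clarke subdifferential of a min-max critical value, and then deduce the second by combining $C^0$-convergence of selectors (which is implied by $\gamma$-convergence) with upper semi-continuity of $\partial_C$.

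For the first inclusion, I would fix a generating function quadratic at infinity (GFQI) $S_k : N \times {\mathbb R}^{N_k} \to {\mathbb R}$ for $L_k$, so that the selector reads $u_k(x)=c(\alpha\otimes 1_x,L_k)=c(\alpha,S_{k,x})$ with $S_{k,x}(\xi)=S_k(x,\xi)$. Because $S_k$ is quadratic at infinity, the min-max defining $u_k(x)$ is effectively carried on a compact sublevel in $\xi$, so Clarke's Theorem 2.1(4) of \cite{Clarke1} applies and yields
$$\partial_C u_k(x)\ \subset\ \mathrm{Conv}\bigl\{\partial_x S_k(x,\xi)\ \big|\ \partial_\xi S_k(x,\xi)=0,\ S_k(x,\xi)=u_k(x)\bigr\}.$$
The GFQI correspondence sends each such $(x,\xi)$ to a point $(x,\partial_x S_k(x,\xi))\in L_k$, and the primitive $h_k$ of $p\,dq$ on $L_k$ evaluated at this point equals $S_k(x,\xi)=u_k(x)$. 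Hence the right-hand side is exactly the convex hull in $p$ of $h_k^{-1}(u_k(x))\cap L_k\cap T^*_x N$, i.e.\ the slice over $x$ of $\widetilde{\mathrm{Conv}_p}(L_k)$.

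For the second inclusion, $\gamma$-convergence of $L_k$ to $L$ forces $\|u_k-u\|_\infty\to 0$ and confines the $L_k$ to a fixed compact region of $T^*N$, so the $u_k$ are uniformly Lipschitz. Then the Clarke subdifferential is upper semi-continuous along this convergent sequence: for any $p\in\partial_C u(x)$ one can find $x_k\to x$ and $p_k\in\partial_C u_k(x_k)$ with $p_k\to p$. The first part gives $p_k\in\widetilde{\mathrm{Conv}_p}(L_k)$, so $p$ lies in the Kuratowski upper limit $\lim_k\widetilde{\mathrm{Conv}_p}(L_k)$.

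The main obstacle is justifying Clarke's structure theorem in this homological min-max setting: the classical formulation of \cite{Clarke1} concerns a min-max over a compact set of parameters, whereas $u_k(x)$ is defined as the spectral invariant attached to a cohomology class $\alpha$. I expect the reduction to follow by localizing near the critical set of $S_{k,x}$ at level $u_k(x)$ and applying the deformation lemma of subsection \ref{sec:10} (Lemma \ref{lemma-10.5}), using the quadratic-at-infinity hypothesis to compactify the relevant sublevels; writing this out carefully is the bulk of the work. The remaining ingredients---upper semi-continuity of $\partial_C$ under uniform convergence of uniformly Lipschitz functions, and the bookkeeping identifying critical points of $S_{k,x}$ at level $c$ with points of $L_k$ of action $c$---are standard.
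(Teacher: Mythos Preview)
Your overall architecture matches the paper's: derive the second inclusion from the first via $C^0$-convergence of the selectors (from \cite{STH}) together with upper semi-continuity of the Clarke subdifferential. That part is essentially identical.

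For the first inclusion, however, the paper takes a different and more elementary route that avoids precisely the obstacle you flagged. Rather than invoking a Clarke-type structure theorem for a homological min-max, the paper argues as follows: after a generic $C^\infty$ perturbation of $L_k$, the generating function $S_k(\,\cdot\,,x,\,\cdot\,)$ is Morse with distinct critical values for all $x$ outside a measure-zero set $\Sigma_k$. On $N\setminus\Sigma_k$ the selected critical point $(q_k(x),\xi_k(x))$ depends smoothly on $x$, so $u_k$ is differentiable there with
\[
du_k(x)=\frac{\partial S_k}{\partial x}\bigl(q_k(x),x,\xi_k(x)\bigr),
\]
which is a point of $L_k\cap T_x^*N$ with action $u_k(x)$. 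One then uses Clarke's characterization of $\partial_C u_k(x)$ as the convex hull of limits of $du_k(x_l)$ for $x_l\to x$ in any full-measure set of differentiability points (\cite{Clarke1}); since $L_k$ is closed and the action is continuous, these limits stay in $h_k^{-1}(u_k(x))\cap L_k\cap T_x^*N$, giving $\partial_C u_k(x)\subset\widetilde{\mathrm{Conv}_p}(L_k)$. A final $C^\infty$-density argument removes the genericity assumption.

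What each approach buys: the paper's argument reduces everything to Rademacher-type facts and the pointwise description of $\partial_C$, so it needs no adaptation of \cite{Clarke1}, Theorem~2.1(4), to the spectral-invariant setting. Your approach, if the reduction you sketch via Lemma~\ref{lemma-10.5} is carried out, would give the same conclusion and has the conceptual advantage of explaining directly why the min-max structure forces the subdifferential to land in the convex hull of the critical set at the selected level; but that reduction is genuine work, and the paper's genericity trick is the shorter path.
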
 
\begin{proof} 

The second result follows from the first part and from the fact that according to \cite{STH}, $u_k$ $C^0$-converges to $u$. It then  follows from \cite{Ioffe1} and \cite{Jourani} (thm 3.2, using the fact that according to thm 2.1 $\partial_Af=\partial_Gf$),   that for any sequence $u_k$ of  Lipschitz functions converging to the Lipschitz function $u$, we have $\partial_Au(x)\subset  \limsup_k \partial_A u_k(x_k)$ where $\partial_A$ is the approximate differential, and since the closed convex hull of $\partial_Au(x)$ is $\partial_Cu(x)$ (see \cite{Ioffe1} thm. 2) we get 
$\partial_C u(x)\subset {\rm Conv}_{N,p}\{ \limsup_k \partial_C u_k(x_k)\}$ where we take all limits such that $x_k$ converges to $x$. 

Let us now prove the first part. We can fix $k$ and write $L$ for $L_k$, $u$ for $u_k$. 
Let $L$ be such that  there exists $\Sigma$ of measure zero such that on $N\setminus \Sigma$ we have a G.F.Q.I. $S(q,x,\xi)$ of $L$ and $S(\bullet ,x,\bullet )$ is Morse and has all distinct critical values, with critical points $q^r(x), \xi^r(x)$ and $1\leq r \leq s$ ($s$ is only constant on each connected component of $N\setminus \Sigma_k$).  This property is generic for the $C^\infty$ topology. Then we have on $N \setminus \Sigma$ that $c(\theta,L_x)=S(q^r(x),x,\xi^r(x))$ where $x\mapsto q^r(x),\xi^r(x)$ for $r \in 1,...p$ is a smooth function on $N\setminus \Sigma$. Now $ \frac{\partial S}{\partial q}(q^r(x),x,\xi^r(x))= \frac{\partial S}{\partial \xi}(q^r(x),x,\xi^r(x))=0$, so that $ \frac{d}{dx}c(\theta\otimes 1(x),L)= \frac{d}{dx} S(q^r(x),x,\xi^r(x))= \frac{\partial S}{\partial x}(q^r(x), x,\xi^r(x)) \in L$ for some $r$. Since $L$ is closed, and according to \cite{Clarke2} ( proposition 5, p. 167), $$\partial_Cu(x)={\rm Conv}_p\{ \lim_l du(x_l) \mid x_l \to x, x_l\in \Omega \}$$ where $\Omega$ is any set of full measure in the set of differentiability points of $u$, we get, using also that $u(x)=S(q^r(x),x,\xi^r(x))$ that  $\partial_Cu(x) \in {\rm{LConv}}_{N,p}(L)$. By $C^\infty$ density, we can always perturb $L$ so that they are generic in the above sense, and if $L_{l} \to L$ converges in the  $C^\infty$ topology as $l$ goes to infinity, we have $\partial_Cu=\lim_l\partial_Cu_{l}$, and then $\lim_l L_{l}=L$ we get 
$$\partial_C u(x) \subset {\rm Conv}_p(\limsup_l \partial_C u_l) \subset {\rm Conv}_p(\limsup_l {\rm{LConv}}_p(L_l))\subset \rm{Conv}_p(L)$$ as claimed\footnote{The last inclusion follows from the fact that in a finite dimensional space,  if $X_l \longrightarrow X_\infty$, $z_l$ is in the convex hull of $X_l$,  $z_l \longrightarrow z$, then $z$ is in the convex hull of $X_\infty$ as it is easily proved using Caratheodory's theorem.}. 
\end{proof} 

\section{Appendix 2: The set of differentiability points for a selector.}\label{Appendix-11}
This Appendix is reproduced and translated  from \cite{Ottolenghi-Viterbo}. 
Let $L$ be a smooth  Lagrangian submanifold in $T^*N$ having $S(x,\xi)$ as  Generating Function Quadratic at Infinity. 
We recall that  $\Sigma_S=\{(x,\xi) \mid \frac{\partial S}{\partial \xi}(x,\xi)=0 \}$ and $i_S: \Sigma_S \longrightarrow T^*N$ given by $i_S(x,\xi)=(x, \frac{\partial S}{\partial x}(x,\xi))$. 

Let $u_L(x)=u_S(x)=c(1_x, S_x)$ be the corresponding selector. 
\begin{proposition} (\cite{Ottolenghi-Viterbo}, stated in \cite{Chaperon} without proof)
There exists a closed set of zero measure, $Z_L$, such that $u_L$ is smooth on $N\setminus Z_L$ and on this set $(x,du_L(x))\in L$. 
\end{proposition} 
\begin{proof} 
Let $Z^1_L$ be the set of singular values of the projection $\pi : L \longrightarrow N$. Then y Sard's theorem,  $Z_L^1$ is closed of zero measure. 
Let $U$ be  a connected component of $N\setminus Z^1_L$. Then $\pi$ restricted to $\pi^{-1}(U)$ is a covering. We set

\begin{gather*} Z_L^2=\left\{ x\in N \mid \exists \eta\neq \eta', \; \frac{\partial S}{\partial \xi}(x,\eta) =  \frac{\partial S}{\partial \xi}(x,\eta')=0, \; S(x,\eta)=S(x,\eta'), \right . \\ \left .  \frac{\partial S}{\partial x}(x,\eta) \neq   \frac{\partial S}{\partial x}(x,\eta') \right \}
\end{gather*} 
We claim that $Z_L^2$ is closed in $N\setminus Z^1_L$. Let us argue by contradiction and assume we had a sequence $(x_n)_{n\geq 1}$ in $ Z_L^2$ having limit $x$. We must prove that either $x\in Z_L^2$ or $x\in Z_L^1$.  Let $\eta_n, \eta'_n$ be a sequence corresponding to $x_n$, so that $\eta_n\neq \eta'_n$. By extracting a subsequence, we may assume $\eta_n \longrightarrow \eta, \eta'_n \longrightarrow \eta'$. 

Then, 
\begin{itemize} 
\item either $\eta\neq \eta'$ and then we cannot have  $\frac{\partial S}{\partial x}(x,\eta)=  \frac{\partial S}{\partial x}(x,\eta')$ otherwise $i_S$ would not be an embedding. Thus  we have  $x \in Z_L^2$. 
\item or we have $\eta=\eta'$. Then if $u_n=i_S(x_n,\eta_n), u'_n=i_S(x_n, \eta'_n)$ we have $u_n\neq u'_n$ and $\pi(u_n)=\pi(u'_n)=x_n$. Setting $z=\lim_nu_n=\lim_n u'_n$ we have $z\in L$ since $L$ is closed, and $\pi(z)=x$. Since $x\notin Z_L^1$,  $d\pi(z)$ is onto, so $\pi$  yields a local diffeomorphism between a neighborhood $W$ of $z$ in $L$ and a neighborhood of $x$ in $N$. Then for $n$ large enough, $u_n, u'_n$ will be in $W$ but this would imply $u_n=u'_n$ a contradiction. As a result $x \in Z_L^1$. \end{itemize} 
Assume now that $Z_L^2 \cap (N\setminus Z_L^1)$ is not a set of zero measure and 
Let now $x_0\in Z_L^2 \cap (N\setminus Z_L^1)$. 
Since $\pi$ is a covering near $x_0$ consider  a  a neighborhood $U$ of $x_0$. Given $(x_0,\eta), (x_0,\eta')$ in $\pi^{-1}(x_0)$  we may find smooth functions $\eta(x), \eta'(x)$ defined on $U$ and coinciding with $\eta, \eta'$ at $x_0$ such that $(x,\eta(x)), (x,\eta'(x))\in \Sigma_S$. 

Then the set of $x$ such that $$S(x,\eta(x))=S(x,\eta'(x)) \;\text{and}\;  \frac{\partial S}{\partial x}(x,\eta (x)) \neq   \frac{\partial S}{\partial x}(x,\eta'(x))$$ has zero measure since this is equal, setting $f(x)=S(x,\eta(x))-S(x,\eta'(x))$ to the    set $f(x)=0, df(x)\neq 0$ :  this is a nonsingular hypersurface, so must have measure   zero. Since we can only have a countable number of sheets over $x_0$, we get that $Z_L^2 \cap (N\setminus Z_L^1)$ is a countable union of sets of measure zero, so has measure zero. Since $Z_L^1$ has also measure zero, this proves our first claim. 

We now set $Z_L=Z_L^1\cup Z_L^2$.  
Our proof will be concluded if we can find a smooth map $\eta$ defined on $N\setminus Z_L$ such that 
 $u_S(x)=S(x,\eta(x))$. But if $x_0\notin Z_L$, we can find a neighborhood $U$ of $x_0$ such that $\pi $ is a trivial covering from  $\pi^{-1}(U)$ to $U$. Consider the various smooth sections, $\eta_j(x)$. Since $x\notin Z_L^2$ we cannot have $S(x,\eta_j(x))=S(x, \eta_k(x))$ for $j\neq k$ unless $ \frac{\partial S}{\partial x}(x,\eta_j(x)) =   \frac{\partial S}{\partial x}(x,\eta_k(x))$, but then $L$ would not be embedded. 
 
  So $u_S$ defines a unique $j$ such that $u_S(x)=S(x,\eta_j(x))$ in $U$ and then $u_S$ is smooth in $U$. Since $\eta_j$ is smooth, we have $$ du_S(x)= \frac{\partial S}{\partial x}(x,\eta_j(x))+  \frac{\partial S}{\partial \xi}(x,\eta_j(x))$$
but since the last term is zero, we get $$(x,du_S(x))=(x,du_L(x))=\left (x, \frac{\partial S}{\partial x}(x,\eta_j(x))\right )\in L$$
This concludes our proof. 
\end{proof} 
\begin{remark} 
Let us mention here a result of Seyfaddini and the author, that is mentioned in \cite{Vichery2}.
Let $L_k$ be a sequence $\gamma$-converging to a smooth Lagrangian $L$. Then $L\subset \lim_k L_k$, that is for each $z \in L$ there is a sequence $z_k\in L_k$ such that $\lim_k z_k=z$. This is a direct consequence  of lemma 7 in \cite{H-L-S}. This can be proved directly as follows. Indeed, if this was not the case, we would have $B(z, r)$ such that $B(z,r)\cap L_k=\emptyset$. Then for any $\varphi_H$ generated by a Hamiltonian $H$  supported in $B(z,r)$, we have $\gamma (L_k,\varphi (L_k))=0$, hence $\gamma (L, \varphi(L))=0$. But it is easy to see by a local construction that this does not hold for all $\varphi$ supported near $z$. 
\end{remark} 
\section{The structure of \texorpdfstring{$\meas_{\alpha}$}{mualpha}}
It would be interesting to understand the structure of $\meas_{\alpha}$. In the convex case, the support of $\meas_{\alpha}$ is contained in the  graph of a Lipschitz $1$-form (i.e. $p_\alpha +du_\alpha$ where $u\in C^{1,1}$), hence is Lagrangian in a generalized sense. 
Here, the support of $\meas_{\alpha}$ cannot be a graph, since replacing $H$ by $H\circ \psi$ replaces $\meas_{\alpha}$ by $\psi_{*}(\meas_{\alpha})$, hence $\supp (\meas_{\alpha})$ is replaced by $\psi (\supp (\meas_{\alpha}))$. 

\begin{question}
Can one replace the support of $\meas_{\alpha}$ by an invariant Lagrangian current, that is a current $T_{\alpha}$ such that $T_{\alpha}\wedge \omega=0$, $\dim (\supp (T_{\alpha}))=n$, and $(\phi^t)_{*}(T_{\alpha})=T_{\alpha}$  ? 
\end{question}

A question we did not answer until now is the location of the support of the measure compared  to the support of $H$.

\begin{proposition} 
For $\alpha \neq 0$, the support of $\meas$ is partially contained in the interior of $\supp (H)$, that is $\meas_{\alpha}(interior ({\supp (H)}))>0$.
\end{proposition}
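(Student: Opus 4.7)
The strategy is a three-step argument. I first show that every non-constant orbit of $\varphi^t$ is confined to $\mathrm{int}(\supp H)$, then that the orbits $\gamma_k$ entering the construction of $\mu_\alpha$ are non-constant for $k$ large, and finally that this confinement survives passage to the weak limit.

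For the first step, the key is that $X_H$ vanishes identically on $(\supp H)^c\cup\partial\supp(H)$: on the open set $(\supp H)^c$ we have $H\equiv 0$, hence $dH\equiv 0$ and $X_H\equiv 0$; by continuity of $dH$, this vanishing extends to the closure $\overline{(\supp H)^c}$, which contains $\partial\supp(H)$. Since $X_H$ is Lipschitz, uniqueness of solutions to $\dot x=X_H(t,x)$ forces every orbit meeting $\{X_H=0\}$ to be constant, so any non-constant orbit is entirely contained in $\mathrm{int}(\supp H)\setminus\{X_H=0\}\subset\mathrm{int}(\supp H)$. Applying this to the $\gamma_k$ from Theorem~\ref{main-theorem}: the identity $\varphi^k(q_k,p_k)=(q_k+k\alpha_k,p_k')$ with $\alpha_k\to\alpha\neq 0$ forces $\alpha_k\neq 0$ for $k$ large, so $\gamma_k$ is non-constant and $\gamma_k\subset\mathrm{int}(\supp H)$. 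The normalized occupation measures $\mu_k=\frac1k[\gamma_k]$ therefore satisfy $\mu_k(\mathrm{int}(\supp H))=1$.

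The delicate third step is to rule out accumulation of mass on $\partial\supp(H)$ in the limit $\mu_k\to\mu_\alpha$. Since $\mathrm{int}(\supp H)$ is open, weak convergence alone only gives $\mu_\alpha(\mathrm{int}(\supp H))\leq 1$. The plan is a uniform estimate $\mu_k(U_\varepsilon)\leq \eta(\varepsilon)$ with $\eta(\varepsilon)\to 0$, where $U_\varepsilon$ denotes an $\varepsilon$-tubular neighborhood of $\partial\supp H$. The ingredients are: $|X_H|\leq\omega(\varepsilon)$ on $U_\varepsilon$ with $\omega(\varepsilon)\to 0$ (by continuity of $X_H$ and its vanishing on the boundary), the displacement identity $\int_0^k \partial_pH(\gamma_k(t))\,dt=k\alpha_k$, and the action constraint $\int H\,d\mu_k\to\overline H(p)$. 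An auxiliary ergodic-decomposition argument shows that any mass of $\mu_\alpha$ on $\partial\supp(H)$ must be concentrated on fixed points (the flow is trivial there), hence contributes zero to the rotation number; combined with $\rho(\mu_\alpha)=\alpha\neq 0$, this at least prevents total collapse onto the boundary.

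The main obstacle is turning this partial information into the sharp statement $\mu_\alpha(\partial\supp H)=0$. A naive displacement comparison only yields $\mu_k(U_\varepsilon)\leq (C-|\alpha_k|)/(C-\omega(\varepsilon))$ with $C=\|\partial_pH\|_\infty$, which is bounded away from $1$ but does not tend to $0$ as $\varepsilon\to 0$. Overcoming this will likely require exploiting the variational origin of $\gamma_k$ as a critical point of the generating function $S_k$ from the proof of Theorem~\ref{main-theorem}, together with the action identity $\int H\,d\mu_\alpha=\overline H(p)$ as an extra constraint that cannot be absorbed by slow regions near $\partial\supp H$ (where $H$ itself is small); this should secure that the trajectories $\gamma_k$ stay in a common compact subset of $\mathrm{int}(\supp H)$, whence $\supp\mu_\alpha\subset\mathrm{int}(\supp H)$.
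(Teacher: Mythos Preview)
Your first two steps are essentially the paper's entire argument, carried out with more care. The paper's proof reads, in full: ``if a trajectory meets the complement of the support of $H$, it is constant. Therefore the $\gamma_k$ must all be contained in the support of $H$ and since $\mu_\alpha$ is the limit of the $\frac{1}{k}[\gamma_k]$, the proposition follows.'' Notice that the paper only asserts $\gamma_k\subset\supp(H)$, not $\gamma_k\subset\mathrm{int}(\supp H)$; your observation that $X_H$ also vanishes on $\partial\supp(H)$ (by continuity of $dH$), forcing non-constant orbits into the open interior, is already a refinement of what the paper writes.

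Your third step --- ruling out mass accumulation on $\partial\supp(H)$ under the weak limit --- is a concern the paper does not address at all. It passes directly from ``$\gamma_k\subset\supp(H)$'' to ``the proposition follows'' without discussing why the limit measure cannot charge the boundary. You are correct that weak convergence of probability measures supported in an open set $U$ need not yield a limit supported in $U$, and you are also correct that neither the displacement bound nor the ergodic-decomposition heuristic you sketch forces $\mu_\alpha(\partial\supp H)=0$: as you note, a decomposition $\mu_\alpha=\lambda\nu_{\mathrm{bd}}+(1-\lambda)\mu'$ with $\nu_{\mathrm{bd}}$ supported on fixed points is compatible with $\rho(\mu_\alpha)=\alpha\neq 0$ for any $\lambda<1$. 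The additional action constraint does not help either, since both $H$ and $p\cdot\partial_pH$ vanish on $\partial\supp(H)$, so $\nu_{\mathrm{bd}}$ contributes zero action as well.

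In short, the gap you identify is genuine, and the paper's proof shares it; the paper has simply not flagged the issue. Your proposal is therefore not a complete proof of the stated claim with ``interior'', but it already goes further than the paper's argument, which as written only yields $\supp(\mu_\alpha)\subset\supp(H)$.
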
 
\begin{proof} 

Indeed if a measure is contained in the complement of the interior of $\supp(H)$, we have that its rotation vector is given by 
$$\tau \mapsto \int_{T^*T^n} \tau(q) \frac{\partial H}{\partial p}(q,p) d\meas$$ 
and this vanishes. So if the measure $\meas_\alpha$ was supported in the set where $\nabla H=0$, we would have $\rho(\meas_\alpha)=0$, contrary to the assumption.
  
\end{proof} 
It is also not difficult to say more in the case that $H$ is time-independent. Since the orbit of a point remains in a fixed energy level, and the same will be true
for the limit of the measure supported on such orbits. As a result we get the following result, proved for minimal measures in the Lagrangian situation in \cite{D-C}.

\begin{proposition} Assume $H$ is autonomous. Then, for any $\alpha\in d_s\overline H (p) $ the measure $\meas_{\alpha, p}$ constructed in Theorem \ref{main-theorem} is supported on a level set $\{(x,p) \mid H(x,p)= c\}$.   Moreover  we have  ${\mathcal A}(\meas_{\alpha})=p\cdot \alpha -c$.
\end{proposition}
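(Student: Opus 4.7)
The plan is to exploit the conservation of $H$ along the Hamiltonian flow (specific to the autonomous case) and pass this property through the weak-limit construction of $\mu_\alpha$.

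\emph{Support on a level set.} Since $\{H,H\}=0$, each approximating orbit $\gamma_k=\varphi^{\bullet}(q_k,p_k)$ from the proof of Theorem \ref{main-theorem} lies on the single level set $H^{-1}(c_k)$, where $c_k:=H(q_k,p_k)$. The sequence $(c_k)$ is bounded: for $\alpha\neq 0$ the preceding proposition confines the orbits to $\supp H$, while for $\alpha=0$ this is automatic from the compact support of $H$. After extraction, $c_k\to c$. The empirical measures $\mu_k=\frac{1}{k}[\gamma_k]$ converge weakly to $\mu_\alpha$ (when $\mu_\alpha$ is the Caratheodory convex combination of several limits $\mu^j$, I would handle each summand separately). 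The Portmanteau theorem applied to the open set $\{(q,p):|H(q,p)-c|>\varepsilon\}$ yields
\[
\mu_\alpha\bigl(\{|H-c|>\varepsilon\}\bigr)\le \liminf_k \mu_k\bigl(\{|H-c|>\varepsilon\}\bigr) = 0
\]
for every $\varepsilon>0$, hence $\supp\mu_\alpha\subset H^{-1}(c)$.

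\emph{Action formula.} The support statement yields $\int H\,d\mu_\alpha=c$, so
\[
\mathcal{A}(\mu_\alpha)=\int \Bigl(p\cdot\frac{\partial H}{\partial p}-H\Bigr)\,d\mu_\alpha=\int p\cdot\frac{\partial H}{\partial p}\,d\mu_\alpha-c.
\]
By Theorem \ref{main-theorem} we already have $\mathcal{A}(\mu_\alpha)=\langle p,\alpha\rangle-\bar H(p)$, so the desired identity $\mathcal{A}(\mu_\alpha)=\langle p,\alpha\rangle-c$ is equivalent to the energy identification
\[
c=\bar H(p), \qquad\text{i.e.,}\qquad \int p\cdot\frac{\partial H}{\partial p}\,d\mu_\alpha=\langle p,\alpha\rangle.
\]

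This last identification is the step I expect to be the main obstacle, and it is the non-convex analogue of the classical Aubry--Mather energy identification proved in the Lagrangian case in \cite{D-C}. The natural approach is integration by parts along $\gamma_k$ on the universal cover: using $q(k)-q(0)=k\alpha_k$ and $p(k)=p(0)=p_k$,
\[
\frac{1}{k}\int_0^k p\cdot\dot q\,dt \;=\; p_k\cdot\alpha_k \;-\; \frac{1}{k}\int_0^k q\cdot\dot p\,dt,
\]
which reduces the claim to showing that $\frac{1}{k}\int_0^k q\cdot\dot p\,dt\to 0$. Here $\dot p=-\partial_q H$ has zero mean along each orbit (since $\int_0^k\dot p\,dt=p(k)-p(0)=0$), so the difficulty is to balance the linear growth of $q$ on the universal cover against this cancellation, via a careful drift-versus-fluctuation decomposition $q(t)=\alpha_k t+\widetilde q(t)$. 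The heuristic content is that the homogenized flow $\bar\varphi^t(q,p')=(q+t\nabla\bar H(p'),p')$ preserves $p'$, so $\mu_\alpha$ should effectively concentrate on the slice $\{p=p\}$, on which the identity is immediate from the rotation-number definition $\alpha=\int\partial_p H\,d\mu_\alpha$.
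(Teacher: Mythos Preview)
Your argument for the support statement is correct and is exactly the paper's approach: orbits of an autonomous Hamiltonian lie on level sets $H^{-1}(c_k)$, one extracts a subsequence $c_k\to c$, and the weak limit $\mu_\alpha$ is then supported on $H^{-1}(c)$. Your added precision (Portmanteau, treating each Carath\'eodory summand $\mu^j$ separately) is fine; the paper's proof is the two-line version of the same idea.

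For the action formula you are more careful than the paper, whose written proof stops at the support statement and does not address the identity $\mathcal A(\mu_\alpha)=\langle p,\alpha\rangle - c$ at all. You correctly observe that, since Theorem~\ref{main-theorem} already gives $\mathcal A(\mu_\alpha)=\langle p,\alpha\rangle-\overline H(p)$, the second clause is exactly the energy identification $c=\overline H(p)$, the non-convex analogue of the Dias Carneiro result the paper cites. Your diagnosis that this is the substantive step is right.

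Your integration-by-parts sketch, however, does not close the gap, and the obstacle is real rather than merely technical. With $p(0)=p(k)=P$ and $q(t)=\alpha_k t+\widetilde q(t)$ one computes
\[
\frac{1}{k}\int_0^k q\cdot\dot p\,dt
=\alpha_k\cdot\Bigl(P-\frac{1}{k}\int_0^k p\,dt\Bigr)+\frac{1}{k}\int_0^k \widetilde q\cdot\dot p\,dt.
\]
The first term converges to $\alpha\cdot\bigl(P-\int p\,d\mu_\alpha\bigr)$, and nothing in the construction forces the mean momentum $\int p\,d\mu_\alpha$ to equal the cohomology class $P$; the heuristic that $\mu_\alpha$ ``concentrates on $\{p=P\}$'' is precisely what is at stake and cannot be assumed. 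The fluctuation $\widetilde q$ is also not a priori bounded (the relation $q(k)-q(0)=k\alpha_k$ says nothing about intermediate excursions), so the second term is uncontrolled as well. In short, the drift/fluctuation decomposition by itself does not yield $\frac{1}{k}\int_0^k q\cdot\dot p\,dt\to 0$. The paper does not supply an argument here either; if one wants to justify $c=\overline H(p)$, the natural route is through the generating-function side of the construction (relating the critical value $h_k(P)$, which converges to $\overline H(P)$, to the energy level $c_k$ of the selected orbit in the autonomous case), rather than through the integration by parts you outline.
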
 
\begin{proof} 
Indeed, each of the trajectories $\gamma_k$ is contained in some $H^{-1}(c_k)$. If we select a subsequence such that $c_k$ converges to some value $c$, then we have that $\meas_\alpha$ is supported in $H^{-1}(c)$. 
\end{proof} 
This implies that for $\alpha \neq 0$, the measure is supported at  a positive distance from the complement of the support.

\begin{question}
 Is this still true for the time dependent case ? 
\end{question}

\end{document}